\newcommand{\arr}{\longrightarrow}
\newcommand{\mapdown}[1]%
{\Big\downarrow\rlap{$\vcenter{\hbox{$\scriptstyle#1$}}$}}
\newcommand{\edge}[1]{\stackrel{#1}{\rule[3pt]{2em}{.5pt}}}
\newcommand{\one}{\mathsf{1}}
\newcommand{\zero}{\mathsf{0}}
\newcommand{\Z}{\mathbb{Z}}
\newcommand{\G}{\Gamma}
\newcommand{\X}{\mathcal{X}}
\newcommand{\be}{\mathsf{s}}
\newcommand{\en}{\mathsf{r}}
\newtheorem{theorem}{Theorem}[section]
\newtheorem{proposition}[theorem]{Proposition}
\theoremstyle{definition}
\newtheorem{defi}[theorem]{Definition}
\newtheorem{example}[theorem]{Example}
\title{Substitutional subshifts and growth of groups}
\author{Volodymyr Nekrashevych}
\begin{document}
\maketitle

\begin{abstract}
We show how to use symbolic dynamics of Schreier graphs to embed the Grigorchuk group into a simple torsion group of intermediate growth and to construct uncountably many growth types of simple torsion groups.
\end{abstract}

\section{Introduction}

The paper is a continuation of~\cite{nek:burnside}, where first examples of simple groups of intermediate growth were constructed. It has two main goals: illustrating flexibility of the techniques introduced in~\cite{nek:burnside} and~\cite{nek:fullgr}, and describing a different approach for defining and studying groups introduced there. We use explicit symbolic substitutional systems and Schreier graphs rather than generate groups by homeomorphisms of the Cantor set. The difference is mostly formal (as there are standard ways of transforming one type of definitions into the other) but one approach may be more convenient in some situations than the other.

As the first illustration, we show how to embed the first Grigorchuk group~\cite{grigorchuk:80_en} into a simple torsion group of intermediate growth. A procedure of constructing a finitely generated simple group from an expansive minimal action on a Cantor set is derscribed in~\cite{nek:fullgr}. The standard action of the Grigorchuk group on the Cantor set is equicontinuous (hence not expansive) which is related to the fact that the Grigorchuk group is residually finite. However, we can easily transform an action into an expansive one by ``exploding'' points. This is, for example, the way how equicontinuous action of $\Z$ on the circle by irrational rotation is transformed into an expansive \emph{Denjoy system}, see~\cite{nielsen:denjoy,denjoy:curbes}. One can perform the   same trick with the Grigorchuk group and get a simple finitely generated group containing the Grigorchuk group and sharing with it many finiteness properties.

We describe this construction in a symbolic way by describing how to construct the Schreier graphs of the new group by substitutions. In fact, the Schreier graphs of the virtually simple group containing the Grigorchuk group will coincide with the graphs of the action of the Grigorchuk group on the Cantor set, except that one of the generators will be split into three elements with disjoint supports, so that the edges labeled by one of the generators of the Grigorchuk group will be labeled by one of three different labels.

We give a direct proof that the new group is virtually simple just by studying the Schreier graphs. The fact that the group is torsion and of intermediate growth will follow from the results of~\cite{nek:burnside}.

L.~Bartholdi and A.~Erschler have proved in~\cite{bartholdiErschler:imbeddings} that every group of locally sub-exponential growth can be embedded into a 2-generated group of sub-exponential growth. It would be interesting to know if every such group can be embedded into a simple group of intermediate growth.

Our second result is showing that there are uncountably many pairwise different growth types among finitely generated simple groups. We construct a family of groups $G_\alpha$ generated by 6 elements such that each group in the family has a simple subgroup of index 16, and there are uncountably many growth types of of groups in the family. The idea of the proof is similar to the analogous result of R.~Grigorchuk in~\cite{grigorchuk:growth_en} for the class of finitely generated groups. But it has to use a different approach to defining groups, since the approach in~\cite{grigorchuk:growth_en} is based on defining group actions on rooted trees, which necessarily leads to residually finite groups.

Our result is the first explicit example of uncountably many growth types of simple groups. It was shown  in~\cite{minasyanosinwitzel} that there are uncountably many pairwise non-quasi-isometric finitely generated groups that are simple (among many other properties). 
Examples of infinite sets of pairwise non-quasi-isometric simple finitely presented groups are constructed in~\cite{capraceremy:building,SkipperZaremsky}.

\section{Groups defined by Schreier graphs}

\subsection{Graphs of actions}
Let $S$ be  a finite set. An \emph{$S$-labeled graph} $\G$ is given by the \emph{set of vertices} $V$, the \emph{set of edges (arrows)} $E$, the \emph{source and range} maps $\be, \en:E\arr V$, and a \emph{labeling map} $\lambda:E\arr S$.
The graph is \emph{perfectly labeled} if for every $v\in V$ and every $h\in S$ there exists a unique $e_1\in E$ such that $\be(e_1)=v$ and $\lambda(e_1)=h$, and a unique $e_2\in E$ such that $\en(e_2)=v$ and $\lambda(e_2)=h$.

A \emph{morphism} between two graphs $\G_1=(V_1, E_1, \be, \en, \lambda)$ and $\G_2=(V_2, E_2, \be, \en, \lambda)$ is a pair of maps $\phi:V_1\arr V_2, \phi:E_1\arr E_2$ such that $\be(\phi(e))=\phi(\be(e))$, $\en(\phi(e))=\phi(\en(e))$, and $\lambda(\phi(e))=\lambda(e)$ for every $e\in E_1$. The morphism is an \emph{isomorphism} if the maps $\phi:V_1\arr V_2$ and $\phi:E_1\arr E_2$ are bijections. A morphism is a \emph{covering} if it is surjective and for every $v\in V_1$ the map $\phi$ induces a bijection from $\be^{-1}(v)$ to $\be^{-1}(\phi(v))$ and from $\en^{-1}(v)$ to $\en^{-1}(\phi(v))$.

Note that if $\G_1$ and $\G_2$ are perfectly labeled and connected, then every morphism between them is a covering. Moreover, for any two perfectly labeled connected graphs $\G_1$ and $\G_2$ and every pair of vertices $v_i\in\G_i$ there exists at most one morphism $\phi:\G_1\arr\G_2$ such that $\phi(v_1)=v_2$.

If $\G$ is perfectly labeled, then every $h\in S$ defines a permutation $V\arr V$ by the condition $h(v)=\en(e)$, where $e$ is the unique edge such that $\be(e)=v$ and $\lambda(e)=h$.
The group generated by these permutations is called the \emph{group defined by $\G$}. We usually identify the elements of $S$ with the corresponding permutations of $V$. Conversely, for every group generated by a finite set $S$ of permutations of a set $V$ we can define the corresponding \emph{graph of the action} with the set of vertices $V$, set of edges $E=S\times V$, and maps $\be(s, v)=v$, $\en(s, v)=s(v)$, $\lambda(s, v)=s$.

All our actions are from the left.

For a graph $\G$ the combinatorial distance between two vertices $v_1, v_2$ is the smallest number of edges in a sequence $e_1, e_2, \ldots, e_n$ such that $v_1\in\{\be(e_1), \en(e_1)\}$, $v_2\in\{\be(e_n), \en(e_n)\}$, and $\{\be(e_i), \en(e_i)\}\cap \{\be(e_{i+1}), \en(e_{i+1}\}\ne\emptyset$. In other words, it is the length of the shortest path connecting $v_1$ to $v_2$ if we disregard the orientation of the edges.

A \emph{rooted graph} is a graph with a marked vertex called the \emph{root}. A morphism of two rooted graphs is a morphism mapping the root to the root. We have already mentioned that two rooted perfectly labeled graphs have at most one morphism between them.

Let $\mathcal{G}_S$ be the set of \emph{rooted} perfectly $S$-labeled connected graphs. Denote by $B_v(R)$ the ball of radius $R$ with center in $v$. Introduce a metric on $\mathcal{G}_S$ by defining the distance between two rooted graphs $(\G_1, v_1)$, $(\G_2, v_2)$ to be $2^{-R}$, where $R$ is the largest radius such that the rooted graphs $(B_{v_1}(R), v_1)$ and $(B_{v_2}(R), v_2)$ are isomorphic (as rooted labeled oriented graphs). This metric induces a natural topology on $\mathcal{G}_S$. An equivalent definition of $\mathcal{G}_S$ is as the Chabauty space of all subgroups of the free group $F_S$ generated by $S$. Namely, a rooted perfectly $S$-labeled graph $(\G, v)$ is in a bijective correspondence with the subgroup of $F_S$ consisting of products $s_1s_2\cdots s_n\in F_S$ such that the corresponding product $s_1s_2\ldots s_n$ of permutations of $V$  fixes $v$. (The graph $(\G, v)$ is reconstructed as the \emph{Schreier graph} of the cosets modulo the subgroup.) The topology on $\mathcal{G}_S$ is then the same as the topology on the space of subgroups of $F_S$ induced from the direct product topology on the set $2^{F_S}$ of subsets of $F_S$.

Suppose that $G$ is the group defined by a perfectly $S$-labeled connected graph $\G$. If $\Delta$ is another connected perfectly $S$-labeled graph such that there is a morphism $\phi:\G\arr\Delta$, then the action of $G$ on $\G$ pushes forward  by $\phi$ to an action on $\Delta$. In particular, the group generated by $\Delta$ is a quotient of the group generated by $\G$ (and the corresponding epimorphism is induced by the tautological map on $S$).

\emph{The hull} of a perfectly labeled graph $\G$ is the closure of the set $\{(\G, v)\;:\;v\in V\}$ in the space $\mathcal{G}_S$, where $V$ is the set of vertices of $\G$. 
It is easy to see that if a graph $\Delta$ is an element of the hull of a graph $\G$, then the group defined by $\Delta$ is a quotient of the group defined by $\G$ (also with the epimorphism induced by the tautological map on $S$). In particular, if $\G_1$ and $\G_2$ have the same hulls, then they define the same groups.

Another easy observation, which we will often use, is that if $g\in G$ is an element of length $n$ (with respect to the generating set $S$) of the group defined by $\G$, then the image $g(v)$ of a vertex $v\in\G$ belongs to $B_v(n)$ and depends only on the isomorphism class of the rooted graph $(B_v(n), v)$. Namely, if $g=h_1h_2\ldots h_n$, then $g(v)$ is the end of the unique path of edges $e_n, e_{n-1}, \ldots, e_1$ starting in $v$ in which the edge $e_i$ is labeled by $h_i$. Here we assume that each edge $e$ comes in two incarnations: in its original orientation labeled by $h\in S$, and in the opposite orientation $e^{-1}$ labeled by $h^{-1}$ (with $\be(e^{-1})=\en(e)$, $\en(e^{-1})=\be(e)$).

\subsection{Linear graphs}

Most of graphs defining group in our paper will be of a very special form.

We say that a sequence $w=(A_n)_{n\in\Z}$ of non-empty subsets of $S$ is \emph{admissible} if $A_n\cap A_{n+1}=\emptyset$ for every $n\in\Z$. We  imagine such a sequence as a graph with the set of vertices $\Z$ in which for every $n$ there are $|A_n|$ edges connecting $n$ to $n+1$ labeled by the elements of $A_n$. We also add loops at $n$ labeled by the elements of $S\setminus (A_n\cup A_{n-1})$. Then the graph $\G_w$ associated with the sequence $w$ is perfectly $S$-labeled. Note that all the edges are not directed in our case. (More precisely, we assume that $h=h^{-1}$ for every $h\in S$, so that each edge $e$ is equal to its inverse.)

Suppose that $w$ is an admissible sequence. Then the action of an element $h\in S$ on the set of vertices $\Z$ of $\G_w$ is given by 
\[h(n)=\left\{\begin{array}{rl} n+1 & \text{if $h\in A_n$,}\\ n-1 & \text{if $h\in A_{n-1}$,}\\ n & \text{otherwise.}\end{array}\right.\]
Denote by $G_w$ the group defined by the corresponding perfectly labeled graph, i.e., the group of permutations of $\Z$ generated by the permutations  defined by the above formula. It is a subgroup of the \emph{wobbling group} on $\Z$, see~\cite{juschenkomonod}.

We assume that $\G_w$ is a rooted graph with the root $0\in\Z$.
The hull of $\G_w$ can be described as the set of rooted graphs $\G_u$ for all sequences $u$ belonging to the \emph{subshift generated by $w$} and the sequence $w^{-1}$ written in the opposite direction, i.e., to the set of all sequences $u$ such that every finite subword of $u$ is a subword of $w$.or of $w^{-1}$. Equivalently, it is the closure of the union of the orbits of $w$ and $w^{-1}$ under the shift. If the sets of finite subwords of $w$ and of $w^\top$ are equal, then the hull is equal to the subshift generated by $w$. (This will be the case in all of our examples.)

\begin{example}
Take $S=\{a, b\}$, and let $w=(\ldots\{a\}\{b\}\{a\}\{b\}\ldots)$. Then $G_w$ is the infinite dihedral group. The subshift generated by $w$ has two elements.
\end{example}

\begin{example}
Consider $S=\{a, b, c\}$ and the Markov chain $w=(\{s_n\})_{n\in\Z}$, where $s_i$ is chosen at random with probability 1/2 from the two possibilities in $S\setminus\{s_{i-1}\}$. Then $G_w$ is isomorphic with probability 1 to the free product $C_2*C_2*C_2$, since every element of the free product will appear as a subword of $w$ with probability 1. In other words, with probability 1 the subshift generated by $w$ is the set of all sequences in the Markov chain.
\end{example}

On the other hand, the group $G_w$ is amenable for a wide class of sequences $w$.

\begin{defi}
A sequence $(a_n)_{n\in\Z}$ is \emph{repetitive} if for every finite segment $(a_i, a_{i+1}, \ldots, a_{i+n})$ there exists a constant $C_n$ such that for every $k\in\Z$ there exists $j\in\Z$ such that $|j-k|<C_n$ and $(a_i, a_{i+1}, \ldots, a_{i+n})=(a_j, a_{j+1}, \ldots, a_{j+n})$. We say that it is \emph{linearly repetitive} if there exists a constant $L$ such that $C_n\le Ln$ for all $n\ge 1$.
\end{defi}

A subshift (i.e., a closed shift-invariant subset of the space of sequences) is called minimal if the orbit of every its element is dense.
It is a classical fact that a sequence is repetitive if and only if it generates a \emph{minimal subshift}. 

We say that a subshift $\mathcal{S}_w$ is linearly repetitive, if $w$ is linearly repetitive. It is easy to check that this does not depend on the choice of a particular sequence $w$ generating the subshift.

The following is a direct corollary of the main result of~\cite{juschenkomonod}.

\begin{theorem}
If $w=(A_n)_{n\in\Z}\in\left(2^S\right)^\Z$ is a repetitive admissible sequence, then $G_w$ is amenable.
\end{theorem}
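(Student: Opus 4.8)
The plan is to derive amenability of $G_w$ from the main theorem of Juschenko--Monod on topological full groups of minimal $\mathbb{Z}$-actions on the Cantor set, by realizing $G_w$ as a subgroup of such a full group. First I would observe that a repetitive admissible sequence $w=(A_n)_{n\in\Z}$ generates a minimal subshift $\mathcal{S}_w\subseteq (2^S)^\Z$: this is the classical equivalence between repetitivity and minimality already recalled in the excerpt. The shift map $\sigma$ acts on $\mathcal{S}_w$ as a minimal homeomorphism. If $\mathcal{S}_w$ is infinite it is a Cantor set (a minimal infinite subshift has no isolated points), and the Juschenko--Monod theorem applies directly to the topological full group $[[\sigma]]$, giving that $[[\sigma]]$ is amenable; if $\mathcal{S}_w$ is finite, then $w$ is periodic and $G_w$ is a finitely generated subgroup of a virtually $\mathbb{Z}$ group, hence trivially amenable, so that case is handled separately and immediately.

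The heart of the argument is the embedding $G_w\hookrightarrow [[\sigma]]$. For each $h\in S$, define a subset $U_h\subseteq\mathcal{S}_w$ as the cylinder set of sequences $u=(B_n)_{n}$ with $h\in B_0$, i.e.\ those $u$ for which, in the graph $\G_u$, the vertex $0$ is joined to $+1$ by an edge labeled $h$. Likewise let $U_h'$ be the cylinder $\{u : h\in B_{-1}\}$, the sequences where $0$ is joined to $-1$ by an $h$-edge. These are clopen subsets of $\mathcal{S}_w$, they are disjoint (admissibility forbids $h\in B_{-1}\cap B_0$), and $\sigma^{-1}$ maps $U_h$ homeomorphically onto $\sigma^{-1}(U_h)$, which one checks equals $U_h'$ after a shift adjustment — more precisely $\sigma(U_h')=\{u:h\in B_0 \text{ read off at position }0\text{ after shifting}\}$; the bookkeeping here is the routine part. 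Then define $T_h\in[[\sigma]]$ to act as $\sigma$ on $U_h$, as $\sigma^{-1}$ on $U_h'$ (these two pieces match up the preimage and image correctly), and as the identity elsewhere. One verifies $T_h$ is a well-defined homeomorphism of $\mathcal{S}_w$ lying in $[[\sigma]]$, and that $T_h$ is an involution. Finally, evaluating the orbit of the base point $w$ itself: the orbit map $n\mapsto \sigma^{-n}w$ (or $\sigma^n w$, depending on conventions) identifies $\mathbb{Z}$ with the $\sigma$-orbit of $w$ in $\mathcal{S}_w$, and under this identification $T_h$ restricts exactly to the permutation $h:\mathbb{Z}\to\mathbb{Z}$ defined by the displayed formula $h(n)=n+1$ if $h\in A_n$, $n-1$ if $h\in A_{n-1}$, $n$ otherwise. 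Since the $\sigma$-orbit of $w$ is dense and $G_w$ acts faithfully on it, the map $h\mapsto T_h$ extends to an injective homomorphism $G_w\hookrightarrow [[\sigma]]$.

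Putting the pieces together: $[[\sigma]]$ is amenable by Juschenko--Monod, $G_w$ is (isomorphic to) a subgroup of $[[\sigma]]$, and amenability passes to subgroups, so $G_w$ is amenable.

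I expect the main obstacle to be purely notational rather than conceptual: getting the indexing of the cylinder sets $U_h$, $U_h'$ and the direction of $\sigma$ versus $\sigma^{-1}$ consistent, so that the homeomorphism $T_h$ genuinely lies in $[[\sigma]]$ (its graph is contained in finitely many translates of the orbit equivalence relation) and genuinely restricts to the stated permutation $h$ on the orbit of $w$. There is also a minor subtlety in the statement's reference to $w^{-1}$ and the "opposite direction" sequence $w^\top$: one should note that the hull/subshift is symmetric under reversal in our setting, or simply work with whichever of $\sigma$, $\sigma^{-1}$ makes the orbit correspondence come out right — either way the minimal system is the same and the Juschenko--Monod hypothesis is met. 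Everything else (clopenness, the involution property, faithfulness on a dense orbit) is a direct check.
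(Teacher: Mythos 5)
Your proposal is correct and follows essentially the same route as the paper, which proves the theorem simply by citing it as a direct corollary of the Juschenko--Monod theorem on topological full groups of Cantor minimal $\Z$-systems; your construction $h\mapsto T_h$ (acting as $\sigma$ on the cylinder $\{h\in B_0\}$, as $\sigma^{-1}$ on $\{h\in B_{-1}\}$, identity elsewhere) together with the density/continuity argument supplies exactly the omitted embedding $G_w\hookrightarrow[[\sigma]]$. One small quibble: in the degenerate case of a periodic $w$ of period $p$, the group $G_w$ need not lie in a virtually $\Z$ group but only in the centralizer of the translation by $p$, isomorphic to $\Z\wr S_p$; this is still (elementary) amenable, so your conclusion in that case stands unchanged.
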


A \emph{segment} or a \emph{subword} of a sequence $(A_n)$ is any finite sequence $(A_i, A_{i+1}, \ldots, A_{i+n})$. A \emph{one-sided sequence} is sequence of the form $(A_0, A_1, \ldots)$ or $(\ldots, A_{-1}, A_0)$. When we talk about segments we usually interpret them as graphs.

For a set $\mathcal{W}$ of finite or infinite (one-sided or two-sided) admissible sequences, we define the \emph{subshift generated by $\mathcal{W}$} as the set of all sequences $(A_n)_{n\in\Z}$ such that every finite segment $(A_i, A_{i+1}, \ldots, A_{i+k})$ is a segment of an element of $\mathcal{W}$.

\subsection{Substitutions}

We will construct admissible sequences over alphabet $S$ using the following notion of a \emph{substitution}.

We start with a set $X_0$ of \emph{initial segments}, a set $C$ of segments called \emph{connectors}, and define inductively the \emph{$n$th generation segments} equal to $x=x_1^{\epsilon_1}e_1x_2^{\epsilon_2}e_2\ldots e_{k-1}x_k^{\epsilon_k}$, where $x_i$ belong to the set $X_{n-1}$ of segments of the $(n-1)$st generation, $e_i\in C$, and $\epsilon_i$ is either nothing or $-1$, denoting the operation of writing a sequence in the opposite direction. The number $k$ and the elements $x_i$, $e_i$, $\epsilon_i$ depend on $x$. 

Our recurrent rules (which we will call \emph{substitutions}) may be \emph{stationary}, i.e., each set $X_n$ is identified with the same set $X$, and the rules of creating the next generation of segments from the segments of the previous one  do not depend on $n$.

We use connectors only for convenience. One can instead replace the set $X_n$ of segments of $n$th generation by $X_n\cup C$ (or by $X_n\cup X_nC$). Then the next generation of segments is obtained just by concatenation of segments of the previous generation.

\begin{example}
\label{ex:Grigorchukgroupgraphs}
Let $S=\{a, b, c, d\}$. The single initial segment will be $\{a\}$. The sets $X_n$ will consist of a single segment $I_n$, and the rules are
\[I_{n+1}=\left\{\begin{array}{rl} I_n\{b, c\}I_n & \text{if $n\equiv 1\pmod{3}$,}\\
I_n\{b, d\}I_n & \text{if $n\equiv 2\pmod{3}$,}\\
I_n\{c, d\}I_n & \text{if $n\equiv 3\pmod{3}$.}\end{array}\right.\]

This rule is not stationary, but if we keep only the sets $X_0, X_3, X_6, \ldots$, then the rule will be a stationary substitution.
\end{example}

\begin{example}
\label{ex:myfibonacci}
Take $S=\{a_i, b_i, c_i, d_i\;:\;i=0, 1, 2,3\}$, the initial segments $I_0$, $I_1$ consisting of single vertices, connectors $e_0=\{a_0, b_0, c_0\}$, $e_1=\{a_1, b_2, c_1\}$, $e_2=\{a_2, b_2, c_2\}$, and $e_{3k+i}$ for $k>0$ equal to $\{b_i, c_i\}$ for $k\equiv 0\pmod{3}$, $\{b_i, d_i\}$ for $k\equiv 1\pmod{3}$, $\{c_i, d_i\}$ for $k\equiv 2\pmod{3}$. Define the segments $I_n$ for $n\ge 2$ by the substitution
\[I_n=I_{n-2}^{-1}e_{n-2}I_{n-1}^{-1}.\]
The group defined by these Schreier graphs in the group of intermediate growth analyzed in~\cite[Section~8]{nek:burnside}.
\end{example}

The following two propositions are classical  (see, for example~\cite[Proposition~1.4.6]{bertherigo:combinatoricsautomata}).

\begin{proposition}
Let $C$ be a finite set of segments, and let $X_n$ be a sequence of finite sets of segments such that each segment $x\in X_{n+1}$ is of the form $x_1e_1x_2e_2\ldots e_{m_x-1}x_{m_x}$, for $x_i\in X_n$ and $e_i\in C$. Suppose that for every $n$ there exists $m$ such that every segment $z\in X_m$ contains every segment $x\in X_n$. Then the subshift $\mathcal{S}$ defined by the set $\bigcup_n X_n$ is minimal. 
\end{proposition}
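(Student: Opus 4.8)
The plan is to reduce minimality of $\mathcal{S}$ to the \emph{uniform recurrence} (equivalently, repetitivity) characterization of minimality recalled above: a subshift is minimal as soon as every finite word occurring in one of its elements reoccurs with bounded gaps in every element. Concretely, I would prove: for every finite word $u$ occurring in some $s\in\mathcal{S}$ there is a length $L(u)$ such that $u$ is a factor of \emph{every} word of length $\ge L(u)$ occurring in an element of $\mathcal{S}$. Minimality then follows: if $\mathcal{S}'\subseteq\mathcal{S}$ is nonempty, closed and shift-invariant, pick $s'\in\mathcal{S}'$ and $t\in\mathcal{S}$; each central window of $t$ lies in the language of $\mathcal{S}$, hence occurs in any sufficiently long factor of $s'$, hence occurs in $s'$; letting the window grow and passing to a convergent subsequence of the corresponding shifts of $s'$ shows $t\in\mathcal{S}'$, so $\mathcal{S}'=\mathcal{S}$.

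To set up $L(u)$, I would first unwind the definition of the subshift generated by $\bigcup_n X_n$: if $u$ occurs in some $s\in\mathcal{S}$, then $u$ is a factor of some $x\in X_{n_0}$. Applying the hypothesis at level $n_0$ gives an $m$ such that every $z\in X_m$ contains $x$, and hence contains $u$. The claim will then be that $L(u):=\max\{M_m,\,2D_m\}+1$ works, where $M_m:=\max\{|x|:x\in X_n,\ n\le m\}$ and $D_m:=\max_{z\in X_m}|z|+\max_{e\in C}|e|$; both are finite because $C$ and all $X_n$ are finite sets of finite segments.

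The core estimate is the following for a word $W$ occurring in $\mathcal{S}$ with $|W|\ge L(u)$. By definition $W$ is a factor of some $y\in X_N$; since $|W|>M_m$, the segment $y$ cannot belong to any $X_n$ with $n\le m$, so $N\ge m+1$. Iterating the substitution rule $N-m$ times exhibits $y$ as a concatenation $z_1e_1z_2e_2\cdots e_{k-1}z_k$ with all $z_i\in X_m$ and all $e_i\in C$ (connectors are never subdivided). The start positions $p_1<p_2<\cdots<p_k$ of the $z_i$ inside $y$ satisfy $p_{i+1}-p_i=|z_i|+|e_i|\le D_m$, and $|z_i|\le D_m$; a short pigeonhole with these positions shows that any window of length $\ge 2D_m+1$ inside $y$ contains some $z_i$ in full. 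Since that $z_i$ contains $u$, so does $W$, which is the desired estimate.

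The only genuinely substantive point — everything else is bookkeeping — is this decomposition step: one must be certain that making $|W|$ exceed $M_m$ forces $W$ to be a factor of a \emph{high-level} segment $y\in X_N$ with $N>m$ (so that the lower-level structure is available), and that such a $y$ really does break up into level-$m$ blocks and connectors so that the count on block starts applies. I would also note at the outset that if $\mathcal{S}$ is empty the assertion is vacuous (equivalently, one may as well assume the segment lengths are unbounded); the choice of $L(u)$, the position count, and the compactness argument are then routine.
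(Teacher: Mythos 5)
Your argument is correct and takes essentially the same route as the paper's proof: you use the hypothesis to pass to a level $m$ at which every segment of $X_m$ contains the given word, and then bound the gaps between its occurrences by the level-$m$ block length plus connector length, i.e., you establish uniform recurrence (repetitivity) of every element of $\mathcal{S}$. The differences are only bookkeeping: you carry out the block--connector decomposition inside a single high-level segment $y$ rather than for the bi-infinite sequences themselves, and you spell out the standard deduction of minimality from uniform recurrence instead of invoking the classical fact cited before the proposition; both choices are fine and, if anything, slightly more careful.
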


\begin{proof}
By definition, for every $n$ every element of $\mathcal{S}$ is of the form $\ldots e_{-1}x_0e_1x_1e_2\ldots$ for some sequences $x_i\in X_n$ and $e_i\in C$. If $I$ is any segment of an element of $\mathcal{S}$, then, by definition of $\mathcal{S}$, it is a subsegment of a segment $x\in X_n$ for some $n$. Let $m\ge n$ be such that every segment $y\in X_m$ contains $x$. Then the distance between any two consecutive copies of $x$ in every $w\in\mathcal{S}$ will be less than twice the maximum of length of elements of $X_m$ plus the maximum of lengths of elements of $C$. It follows that every $w\in\mathcal{S}$ is repetitive.
\end{proof}

\begin{proposition}
\label{pr:linrepetitive}
Let $X_n$ be a sequence of finite sets of segments and let $\mathcal{S}$ be the subshift generated by their union.

Suppose that the following conditions are satisfied
\begin{enumerate}
\item Every element  $x=X_{n+1}$ is a concatenation $x_1x_2\ldots x_m$ of elements of $X_n$.  
\item There exists $k$ such that if $x_1x_2$, for $x_1, x_2\in X_n$, is a segment of an element of $\mathcal{S}$, then $x_1x_2$ is a segment of an element of $X_{n+k}$.
\item The numbers $m$, $|X_n|$ in condition (1) are uniformly bounded.
\end{enumerate}
Then the subshift generated by the union of the sets $X_n$ is linearly repetitive.
\end{proposition}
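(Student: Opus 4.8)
The plan is to run the classical argument that a primitive substitution generates a linearly recurrent subshift (cf.\ \cite[Proposition~1.4.6]{bertherigo:combinatoricsautomata}), with conditions (1)--(3) supplying its three ingredients: compatible decompositions of the elements of $\mathcal S$ into level-$n$ blocks, a \emph{uniform} primitivity exponent, and control of the relative lengths of level-$n$ blocks. I fix an integer $M$ bounding both the number of factors $m$ in (1) and the cardinalities $|X_n|$, I write $\ell_n=\max_{x\in X_n}|x|$, and I note $\ell_n\le\ell_{n+1}\le M\ell_n$. If $\sup_n\ell_n<\infty$ then, since a bi-infinite sequence has factors of every length, either $\mathcal S=\emptyset$ or $\mathcal S$ consists of periodic sequences of bounded period and the statement is trivial; so I assume $\ell_n\to\infty$. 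Finally I read ``$\mathcal S$ is linearly repetitive'' in the uniform sense: there is $L$ such that every factor of length $\ell$ of every $w\in\mathcal S$ occurs in every factor of length $L\ell$ of $w$. This is what the argument produces, and it does not presuppose minimality of $\mathcal S$ (one should note that (1)--(3) alone do permit $\mathcal S$ to split into several minimal pieces).

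First I would record the block decomposition. A compactness/diagonal argument shows that every $w\in\mathcal S$ admits, for each $n$, a partition of $\Z$ into consecutive intervals each supporting an element of $X_n$, with the $(n+1)$-partition refining the $n$-partition via (1): any long factor of $w$ lies inside some element of some $X_N$, hence inside a concatenation of level-$n$ blocks, the possible cut points next to a fixed coordinate range over a bounded set, and extracting a consistent choice over $n$ produces the required towers. Consequently a factor of $w$ of length $\ell$ that does not exceed the length of the shortest level-$n$ block occurring in $w$ is contained in some $y_iy_{i+1}$, a concatenation of two consecutive level-$n$ blocks of $w$.

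The heart of the matter, and the step I expect to be the main obstacle, is the following \emph{uniform primitivity} statement: there is $q$, depending only on $M$ and on the constant $k$ of condition (2), such that for every $n$ and every $w\in\mathcal S$, each level-$(n+q)$ block of $w$ contains, as subwords, every level-$n$ block of $w$ and every concatenation of two consecutive level-$n$ blocks of $w$. In the classical stationary case this is the Perron--Frobenius fact that a high power of a primitive incidence matrix is strictly positive; here there is no single matrix, so I would argue from the finiteness in (3). The set of level-$n$ blocks that occur at all stabilises after finitely many generations (there are at most $M$ of them), and on that stable range condition (2) --- every legal adjacent pair at level $n$ already lies inside one block at level $n+k$ --- can be iterated a number of times controlled by $M$ to force every block and every legal pair into every sufficiently higher block. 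The delicate points are: keeping the iteration count independent of $n$; passing from ``occurs in some element of $\mathcal S$'' (the content of (2)) to ``occurs in the given $w$'' without covertly assuming minimality; and handling the stabilisation cleanly. This is also the step at which minimality of $\mathcal S$ gets (re)proved, as in the preceding proposition.

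Granting uniform primitivity, the conclusion is a routine length count. Since the shortest level-$(n+q)$ block of $w$ is at least as long as the longest level-$n$ block of $w$, while every level-$(n+q)$ block of $w$ is a concatenation of at most $M^q$ level-$n$ blocks of $w$, comparison of consecutive generations bounds the ratio of the longest to the shortest level-$n$ block occurring in $w$ by a constant $\rho=\rho(M,q)$, uniformly in $n$. Given $w$ and a factor $u$ of length $\ell$, pick the least $n$ with (shortest level-$n$ block of $w$)~$\ge\ell$; that minimum is then $<M\rho\ell$, so by the second paragraph $u\subseteq y_iy_{i+1}$ for two consecutive level-$n$ blocks of $w$. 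By uniform primitivity this pair, hence $u$, occurs inside every level-$(n+q)$ block of $w$, each of which has length at most $M^q\cdot M\rho^2\ell$. Therefore $u$ recurs in $w$ within every window of length $L\ell$ with $L=2M^{q+1}\rho^2$; equivalently $C_n\le Ln$ in the notation of the definition, and $\mathcal S$ is linearly repetitive.
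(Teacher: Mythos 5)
Your block decompositions and the final length count are fine, but the proof has a genuine gap at exactly the place you flag: the ``uniform primitivity'' statement is never proved, only announced together with a list of delicate points (iteration count independent of $n$, passing from occurrence in \emph{some} element of $\mathcal{S}$ to occurrence in the given $w$, stabilisation). Worse, under the literal reading of condition (2) that you adopt --- each legal pair $x_1x_2$ occurs in \emph{some} element of $X_{n+k}$ --- the lemma you need is false, and so is any per-$w$ version of the conclusion. Take $X_n=\{A_n,B_n\}$ with $A_{n+1}=A_nA_n$ and $B_{n+1}=A_nB_nB_nA_n$ (realizable by admissible segments). Conditions (1)--(3) hold with $k=1$: the only legal level-$n$ pairs are $A_nA_n$, $A_nB_n$, $B_nB_n$, $B_nA_n$, and each occurs inside $A_{n+1}$ or $B_{n+1}$. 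But the generated subshift contains the point whose left half is the constant $A_0$-sequence and whose right half is the one-sided limit of the words $B_m$ read from their first occurrence of $B_0$; in this point the factor $B_0$ occurs only to the right of the origin, so it is not even repetitive, the subshift is not minimal, and the level-$(n+q)$ blocks equal to $A_{n+q}$ never contain the level-$n$ block $B_n$ that does occur in this $w$. So your worry about ``not covertly assuming minimality'' cannot be argued away: with the weak reading of (2) there is nothing to prove it from, and your remark that (1)--(3) merely ``permit $\mathcal{S}$ to split into several minimal pieces'' understates the failure.

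The paper resolves this by reading condition (2) the other way: after replacing $X_n$ by $X_{nk}$ (so that $k=1$), it takes (2) to give that every legal pair --- hence every element of $X_n$ --- is a subsegment of \emph{every} element of $X_{n+1}$. With that reading your uniform primitivity is immediate with $q=k$, and none of the tower/stabilisation machinery is needed; the argument collapses to the paper's short one: conditions (1) and (3) bound the ratio of lengths of any two elements of $X_n\cup X_{n+1}$; given a factor $v$, choose the smallest $n$ such that $v$ lies inside a legal pair $x_1x_2$ with $x_1,x_2\in X_n$, so that $|v|$ is comparable to the lengths at level $n$; then $x_1x_2$, hence $v$, occurs inside every level-$(n+1)$ block of every element of $\mathcal{S}$, so the gaps between occurrences of $v$ are bounded by a constant times $|v|$. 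To make your write-up correct you should adopt this stronger reading of (2) explicitly (or add it as a hypothesis); as it stands, your central lemma is both unproved and unprovable from the hypotheses as you interpret them.
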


\begin{proof}
After replacing the sequence $X_n$ by $X_{nk}$, we may assume that $k=1$. Note that then every segment $x\in X_n$ is a sub-segment of every element of $X_{n+1}$.
Let $L_n$ be the maximal length of an element of $X_n$. Then the length of every element of $X_{n+1}$ is at least $|X_n|L_n$ and at most $mL_n$. It follows that the ratio of lengths of any two elements of $X_{n+1}\cup X_n$ belongs to an interval $(C^{-1}, C)$ for some constant $C>1$. 

Let $v$ be a segment of an element of the sushift $\mathcal{S}$ generated by the union of the sets $X_n$. Let $n$ be the smallest number such that $v$ is a subsegment of a segment $x_1x_2$ for $x_1, x_2\in X_n$ such that $x_1x_2$ is a segment of an element of $\mathcal{S}$. Then the length of $v$ is not greater than $2L_n$. On the other hand, the length of $v$ can not be smaller than the length of the shortest element of $X_{n-1}$. It follows that the ratio of the lengths of $v$ and any element of $X_n$ belongs to an interval $(C_1^{-1}, C_1)$ for a constant $C_1>1$ not depending on $v$ and $n$. Then the second condition of the proposition implies that the gaps between isomorphic copies of $v$ in an element of $\mathcal{S}$ have length bounded above by $C_2|v|$.
\end{proof}

\subsection{Groups of intermediate growth}

If $G$ is a group generated by a finite set $S$, then the associated \emph{growth function} is the number $\gamma_G(n)=\gamma_{G, S}(n)$ of elements of $G$ that can be written as products $s_1s_2\ldots s_k$ for $s_i\in S\cup S^{-1}$ and $k\ge n$. The growth \emph{rate} is the equivalence class of $\gamma_{G, S}(n)$ with respect to the equivalence relation identifying two non-decreasing functions $f_1, f_2$ if there exists a constant $C>1$ such that 
\[f_1(n)\le f_2(Cn),\quad\text{and}\quad f_2(n)\le f_1(Cn)\]
for all $n\ge 1$. The growth rate does not depend on the choice of the generating set.

\begin{theorem}
\label{th:intermediategrowth}
Let $\mathcal{S}\subset\left(2^S\right)^{\Z}$ be a linearly repetitive infinite subshift consisting of admissible sequences and containing three sequences $(B_n)_{n\in\Z}$, $(C_n)_{n\in\Z}$, $(D_n)_{n\in\Z}$ such that $B_n=B_{-n}=C_n=C_{-n}=D_n=D_{-n}$ for all $n\ge 1$ and $B_0=\{c, d\}, C_0=\{b, d\}, D_0=\{b, c\}$ for some $b, c, d\in S$.

Then $G_{\mathcal{S}}$ is a infinite torsion group and its growth function $\gamma(R)$ is bounded from above by $\exp(C_1n/\exp(C_2\sqrt{\log n}))$ for some constants $C_1, C_2$.
\end{theorem}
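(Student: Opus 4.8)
The plan is to derive the statement from the machinery of \cite{nek:burnside} by checking that the hypotheses of the relevant results there are satisfied, while supplying self-contained arguments for the two assertions — torsion and growth — separately. The key structural fact is that the three special sequences $(B_n), (C_n), (D_n)$ are symmetric about the origin and have prescribed central letters $\{c,d\}, \{b,d\}, \{b,c\}$; this is exactly the combinatorial signature of the Grigorchuk-type relations, and it is what forces the elements $b, c, d$ (together with $a$, or whatever fourth generator arises) to act as torsion elements. So the first step is to identify which generators of $S$ actually occur and to record the local picture of $\G_w$ near the three ``defect'' vertices.

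For the torsion claim, I would argue directly on the Schreier graphs, as the introduction promises. Fix $g \in G_{\mathcal{S}}$ of length $n$; by the last observation of Section~2.1, the action of $g$ on any vertex $v$ of any graph $\G_u$, $u \in \mathcal{S}$, depends only on $(B_v(n), v)$. Linear repetitiveness (Proposition~\ref{pr:linrepetitive} is the source) means every ball of radius $n$ appears within bounded gaps, so there are only finitely many isomorphism types of pointed balls of each radius, and moreover the subshift is minimal. The standard Grigorchuk contraction argument then applies: using the symmetric sequences $(B_n), (C_n), (D_n)$ one sees that on a segment of $\G_w$ the generator-permutations decompose (pass to ``halves'' of the segment), and a word of length $n$ restricted to a half-segment has length at most roughly $n/2 + O(1)$; iterating, every $g$ is trivial on every sufficiently small piece, hence on all of $\Z$ after boundedly many halvings, i.e.\ $g$ has finite order bounded in terms of $|g|$. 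Infiniteness of $G_{\mathcal{S}}$ follows because the subshift is infinite, so the graphs $\G_u$ are infinite and the generator-permutations already generate an infinite group of wobbling permutations (alternatively, the Grigorchuk group embeds). The main obstacle here is to make the ``halving'' bookkeeping precise in the symbolic language — tracking how a path of length $n$ in $\G_w$ interacts with the recursive block structure $x = x_1 e_1 x_2 \cdots$, and showing that crossing a connector $e_i$ costs enough to guarantee strict length decrease. This is where one must genuinely invoke, rather than merely cite, the computations of \cite[Section~8]{nek:burnside}; but since the hypothesis is tailored to match that setting, the citation does the real work.

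For the growth bound I would reduce to the general upper-bound theorem of \cite{nek:burnside}: for groups arising from linearly repetitive admissible subshifts with this Grigorchuk-type local structure, the growth is at most $\exp\!\bigl(C_1 n / \exp(C_2\sqrt{\log n})\bigr)$. The proof there is the standard interplay between the torsion/contraction estimate and the counting of elements: an element of length $n$ is determined by its germ-like data on balls of radius $O(\log n)$ around the finitely many defect vertices in each block at scale $\sim \log n$, because linear repetitiveness converts a length-$n$ word into data on $O(n/\ell)$ blocks of size $\ell$, each contributing a bounded amount of information, optimized at $\ell \approx \exp(C\sqrt{\log n})$. So the step is: verify that $\mathcal{S}$ falls under the hypotheses of that theorem (linear repetitiveness is assumed; admissibility is assumed; the presence of the three symmetric sequences supplies the branching structure), and then quote the bound. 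I expect the genuine difficulty to be entirely in the first (torsion) part — specifically in verifying that the abstract ``contraction'' hypothesis of \cite{nek:burnside} is met by the combinatorial data given here — since once torsion with the quantitative length bound is in hand, the growth estimate is a formal consequence of the counting argument already carried out in \cite{nek:burnside}, applied verbatim.
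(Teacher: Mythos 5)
Your proposal takes essentially the same route as the paper: the paper's proof consists precisely of observing that the argument of \cite[Theorem~6.6]{nek:burnside} applies verbatim, since it uses only linear repetitivity and the structure of the three symmetric sequences (and their smallest common cover $\Xi$), which is what you verify and then cite. Your additional sketches of the contraction/torsion and counting arguments are consistent with that source, so the proposal is correct and not a genuinely different proof.
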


\begin{proof}
The proof is the same as the proof of~\cite[Theorem~6.6]{nek:burnside}, where only linear repetitivity and the structure of the graphs $(B_n)_{n\in\Z}$, $(C_n)_{n\in\Z}$ and $(D_n)_{n\in\Z}$ and their smallest common covering graph $\Xi$ are used.
\end{proof}

\begin{example}
The group defined in Example~\ref{ex:Grigorchukgroupgraphs} is of intermediate growth, since the corresponding graphs are linearly repetitive by Proposition~\ref{pr:linrepetitive}, and the corresponding subshift contains $I_\infty^{-1}\{b, c\}I_\infty, I_\infty^{-1}\{d, b\}I_\infty$, and $I_\infty^{-1}\{c, d\}I_\infty$, where $I_\infty=\{a\}\{b, c\}\{a\}\{d, b\}\{a\}\{b, c\}\{a\}\ldots$ is the inductive limit of the segments $I_n$ with respect to the embeddings of $I_n$ to the suffix of $I_{n+1}$.
\end{example}

\begin{example}
The same arguments show that the group defined in Example~\ref{ex:myfibonacci} is also of intermediate growth.
\end{example}

\section{Embedding Grigorchuk group into a simple group}
\label{s:simpleGrigorchuk}

The \emph{(first) Grigorchuk group} $G$ is the group generated by the transformations of $\{\zero, \one\}^\infty$ generated by the following recurrently defined permutations $a, b, c, d$
\begin{alignat*}{2}
a(\zero w)&=\one w, &\qquad  a(\one w)&=\zero w,\\
b(\zero w)&=\zero a(w), &\qquad  b(\one w)&=\one  c(w),\\
c(\zero w)&=\zero a(w), &\qquad b(\one w)&=\one  d(w),\\
d(\zero w)&=\zero w, &\qquad  d(\one w)&=\one  b(w).
\end{alignat*}

The following recurrent description of the orbital graphs of the action of $G$ on $\{\zero, \one\}^\infty$ is well known, see~\cite{bgr:spec}. The precise description of the space of orbital graphs is from~\cite{vorob:schreiergraphs}.

\begin{proposition}
Let $\mathcal{S}$ be the subshift defined by following segments:
\[I_1=\{a\}, \qquad I_{n+1}=I_n e_n I_n,\]
where $e_n=\left\{\begin{array}{rl}\{b, c\} & \text{for $n\equiv 1\pmod{3}$,}\\
\{d, b\} & \text{for $n\equiv 2\pmod{3}$,}\\
\{c, d\} & \text{for $n\equiv 3\pmod{3}$.}\end{array}\right.$
Then the group defined by $\mathcal{S}$ is isomorphic to the Grigorchuk group $G$ (for the same generators $a, b, c, d$).

There is a surjective equivariant continuous map $\Phi:\mathcal{S}\arr\{\zero, \one\}^\infty$, which is one-to-one except for the set of graphs isomorphic (as non-rooted graphs) to one of the three graphs $I_\infty^{-1}e_nI_\infty$, where $I_\infty$ is the direct limit of the segments $I_n$ with respect to the embedding of $I_n$ into the left half of $I_{n+1}$. The map $\Phi$ maps all three rooted graphs $I_\infty^{-1}e_nI_\infty$ to $\one\one\one\ldots$, so that $\Phi$ is three-to-one on the exceptional set. 

The orbital graph of $\one\one\one\ldots$ is one-ended chain described by the sequence $I_\infty$.
\end{proposition}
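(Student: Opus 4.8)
The plan is to verify the three assertions of the Proposition in turn, working directly with the recursive structure of the orbital graphs of $G$ on $\{\zero,\one\}^\infty$.

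\emph{Identifying the graphs.} First I would recall (or re-derive from the wreath recursion) the standard fact about the orbital graphs $\G_w$ of the action of $G$ on $\{\zero,\one\}^\infty$: the orbit of a point $w$ is obtained from the orbit of $\sigma w$ (where $\sigma$ is the shift) by a substitution rule which, depending on the first letter of $w$ and on the value of $n\bmod 3$ governing which of $b,c,d$ acts trivially at level $1$, glues two copies of the level-$n$ orbital graph along a vertex by an edge labeled $e_n$. This is exactly the content of the recursion $I_{n+1}=I_n e_n I_n$, with $e_n$ cycling through $\{b,c\},\{d,b\},\{c,d\}$. Concretely: the segment $I_n$ describes the ball of combinatorial radius growing with $n$ around a generic vertex; one checks that the permutations of $\Z$ defined by the admissible sequence agree with the restrictions of $a,b,c,d$ to a single orbit, and that the subshift $\mathcal{S}$ generated by all the $I_n$ is precisely the hull of this system. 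Since the action of $G$ on $\{\zero,\one\}^\infty$ is faithful and $\mathcal{S}$ contains (a graph isomorphic to) every orbital graph, the group defined by $\mathcal{S}$ — which by the remarks in Section~2 is the quotient of $F_S$ acting on all these graphs simultaneously — is isomorphic to $G$ for the generators $a,b,c,d$. The only subtlety here is faithfulness on the union of orbital graphs, which follows since $\{\zero,\one\}^\infty$ is covered by orbits and $G$ acts faithfully.

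\emph{Constructing $\Phi$.} Next I would build the map $\Phi:\mathcal{S}\arr\{\zero,\one\}^\infty$. Given a rooted graph $(\G_u,0)\in\mathcal{S}$, the value $\Phi(\G_u,0)$ should be the unique point $w\in\{\zero,\one\}^\infty$ whose ball structure in its orbital graph matches $(\G_u,0)$ to all finite radii; equivalently, $w$ is recovered letter by letter by reading off at each scale $n$ whether the root $0$ lies in the left copy or the right copy of $I_n$ inside the surrounding $I_{n+1}$-block of $u$ (this is well-defined because the subword of $u$ around $0$ eventually determines, for each $n$, a unique such decomposition — unless the root sits "at infinity" on one side, which is the exceptional situation below). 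Continuity is immediate from the definition of the metrics on $\mathcal{S}$ and on $\{\zero,\one\}^\infty$; equivariance holds because the $G$-action on $\mathcal{S}$ by changing the root is intertwined with the $G$-action on $\{\zero,\one\}^\infty$ by construction; surjectivity holds because every orbital graph occurs in $\mathcal{S}$.

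\emph{The exceptional set and the count.} The heart of the argument is pinning down where $\Phi$ fails to be injective. Two rooted graphs $(\G_u,0),(\G_{u'},0)$ have the same image iff they are isomorphic as \emph{non-rooted} graphs but the isomorphism moves $0$; since the non-rooted graph determines $w$ up to being in the same orbit, the fibers of $\Phi$ are exactly the $G$-orbits intersected with $\mathcal{S}$, and $\Phi$ is injective precisely at graphs whose automorphism-or-rather-orbit structure is rigid. A linear (two-ended) chain $\G_u$ with $u$ aperiodic has the property that for each radius $R$ all sufficiently far vertices look alike, but distinct vertices give genuinely non-isomorphic rooted balls once $R$ is large — so $\Phi$ is injective there. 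The failure occurs exactly for the \emph{one-ended} chains: these are the graphs of the form $I_\infty^{-1}e_n I_\infty$ (a ray, built by taking the direct limit $I_\infty$ of the $I_n$ under left-inclusion and attaching it on both sides of a single $e_n$-edge at the "origin"), and here I must show there are exactly three of them (one for each $n\bmod 3$, i.e.\ for each of the three possible central edges $\{b,c\},\{d,b\},\{c,d\}$), that each maps to $\one\one\one\ldots$, and that $\Phi$ is otherwise one-to-one. That all three map to $\one\one\one\ldots$ follows because $I_\infty$ encodes the rooted ball structure at the point $\one\one\one\ldots$ (at every level the root is in the left copy, corresponding to the first letter being $\one$ for the action of the trivial-at-this-level generator) — this is the final sentence of the Proposition, which I would prove by induction on $n$ using the wreath recursion. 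Conversely, the orbit of $\one\one\one\ldots$ in $\{\zero,\one\}^\infty$ is countable and its orbital graph is the one-ended ray; inside $\mathcal{S}$ the preimage $\Phi^{-1}(\one\one\one\ldots)$ consists of all rootings of this ray, but as a rooted graph a one-ended ray has essentially one isomorphism type once we also record the central-edge label, and the central edge can be $e_1,e_2,e_3$ — giving the three graphs and establishing that $\Phi$ is three-to-one there and injective elsewhere.

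\emph{Main obstacle.} The routine parts are the identification $G_{\mathcal{S}}\cong G$ and the continuity/equivariance of $\Phi$. The delicate point is the precise injectivity analysis: showing that the \emph{only} coincidences in the fibers of $\Phi$ come from the one-ended rays, and that there are exactly three such rooted graphs rather than infinitely many or fewer. This requires a careful bookkeeping of how rooted balls $(B_0(R),0)$ in $\G_u$ determine, and are determined by, finite prefixes of $w=\Phi(\G_u,0)$, together with the observation that the direct limit $I_\infty$ is aperiodic (so no extra identifications among the rays occur) yet "one-sided" (so the three rays are honestly different from the two-ended chains). I would expect to spend most of the effort making this matching precise, presumably by exhibiting an explicit bijection between finite words in $\{\zero,\one\}^*$ (minus the all-$\one$ words) and isomorphism classes of finite rooted balls, and then passing to the limit.
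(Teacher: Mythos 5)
Your construction of $\Phi$ is, up to presentation, the map the paper itself records: the paper does not prove this proposition (it refers to Vorobets' article for the proof) and only makes $\Phi$ explicit by labeling vertices via $I_1=\one\,{a}\,\zero$ and $I_{n+1}=I_n\one\, e_n\,(I_n\zero)^{-1}$, so that the label of a vertex records, scale by scale, whether it sits in the left or the right half of the surrounding copy of $I_{n+1}$ — exactly your ``left copy / right copy'' reading. Your remarks on continuity, equivariance and surjectivity are fine, as is the reduction of $G_{\mathcal{S}}\cong G$ to faithfulness plus the identification of $\mathcal{S}$ with the closure of the orbital graphs (the latter being precisely the content outsourced to Vorobets, which you also only assert).

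The genuine gap is in the injectivity analysis, which you rightly call the heart of the matter but then get wrong. The graphs $I_\infty^{-1}e_nI_\infty$ are not one-ended chains or rays: attaching $I_\infty$ to both sides of a connector produces a two-ended line, and indeed every element of the two-sided subshift $\mathcal{S}$ is a two-ended chain. The one-ended ray is the orbital graph of $\one\one\one\ldots$, and it does \emph{not} belong to $\mathcal{S}$ at all — that is exactly why $\Phi$ fails to be injective over the orbit of $\one\one\one\ldots$: in the hull this orbital graph is replaced by the three lines $I_\infty^{-1}e_nI_\infty$, each of which folds onto it. Consequently your criterion ``same image iff isomorphic as non-rooted graphs by an isomorphism moving the root,'' and the ensuing claim that the fibers of $\Phi$ are the $G$-orbits, are false: two different rootings of a single aperiodic line are isomorphic as unrooted graphs yet map, by equivariance, to two \emph{different} points of the same orbit, and if fibers were orbits then $\Phi$ would be infinite-to-one everywhere. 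Likewise $\Phi^{-1}(\one\one\one\ldots)$ is not ``all rootings of the ray'' (a ray has no central edge to record); it consists of the three lines $I_\infty^{-1}e_1I_\infty$, $I_\infty^{-1}e_2I_\infty$, $I_\infty^{-1}e_3I_\infty$, each rooted at its central vertex (the two central rootings of one line give isomorphic rooted graphs, since the line admits a label-preserving flip through the connector), and the remaining points of the orbit of $\one\one\one\ldots$ are covered three-to-one by the other rootings of these same three lines. To prove the proposition along your route you would still need the bookkeeping the paper delegates to Vorobets: that for every vertex of every other chain in $\mathcal{S}$ the nested left/right data (equivalently, the limit of the vertex labels) determines the rooted graph uniquely, and that the only coincidences are the ones just described. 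As written, your description of the exceptional set and of its fibers is incorrect, so the three-to-one count is not established.
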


We see that Example~\ref{ex:Grigorchukgroupgraphs} defined the orbital graphs of the Grigorchuk group.
We refer the readers to~\cite{vorob:schreiergraphs} for the proof of the proposition. Here we will only describe the map $\Phi$ by labeling the vertices of $I_n$. Define $I_1$ as the graph $\one\edge{a}\zero$. Define then \[I_{n+1}=I_n\one e_n (I_n\zero)^{-1},\] where $I_n\one$ and $I_n\zero$ are obtained from $I_n$ by appending to the end of the names of their vertices symbols $\one$ and $\zero$, respectively. As usual, $I^{-1}$ denotes reverting the orientation of a segment $I$.

The new vertex-labeled graphs $I_n$ are isomorphic to the graphs $I_n$ from the proposition. (Note that $I_n$ in the proposition are symmetric.)

It is checked by induction that the left endpoint of $I_n$ is $\underbrace{\one\one\ldots\one}_{\text{$n$ times}}$, its right endpoint is $\underbrace{\one\one\ldots\one}_{\text{$n-1$ times}}\zero$, and if $v$ and $u$ are two edges of $I_n$ connected by an edge labeled by a generator $s$, then $s(vw)=uw$ for all $w\in\{\zero, \one\}^\infty$. 
The map $\Phi$ is then the limit of the described labeling.

The Grigorchuk group also acts on the set $\{\zero, \one\}^*$ of finite words (using the same definition as for the infinite sequences). The action preserves the structure of a rooted tree on $\{\zero, \one\}^*$ (where a vertex $v$ is connected to $vx$ for every $x\in\{\zero, \one\}$).
This implies that the Grigorchuk group is residually finite.

Dynamically, the fact that the Grigorchuk group is residually finite corresponds to the fact that its action on $\{\zero, \one\}^\infty$ is equicontinuous. On the other hand, expansive actions can be used to construct simple groups, see~\cite{nek:fullgr}. A standard trick to force expansivity is ``exploding'' points of an orbit. For example, this is the way expansive Denjoy homeomorphisms of the Cantor set are obtained from equicontinuous actions by irrational rotations on the circle, see~\cite{nielsen:denjoy,denjoy:curbes}. 

We can perform the same trick with the Grigorchuk group. For example, we can split the point $\zero\zero\zero\ldots$ in two by separating the sequences with an odd and an even numbers of leading symbols $\zero$ into two neighborhoods of the two copies of $\zero\zero\zero\ldots$. 
After we propagate this split along the $G$-orbit of $\zero\zero\zero\ldots$, the Grigorchuk group will act on the new Cantor set expansively. Then the corresponding alternating full group of the action, as defined in~\cite{nek:fullgr}, will be simple and finitely generated.

More explicitly, the new Cantor set $\X$ will be the set of all right-infinite sequences $x_1x_2\ldots$ over the alphabet $\{\zero_0, \zero_1, \one\}$ such that each length 2 subword $x_ix_{i+1}$ belongs to the set
\[\{\zero_0\zero_1,\quad\zero_1\zero_0,\quad\zero_1\one,\quad\one\zero_0,\quad\one\zero_1,\quad\one\one\}.\]
Here the symbol $\zero_0$ ``predicts'' that there will be an even number of zeros (including itself) before the first $\one$, while $\zero_1$ ``predicts'' that the number of zeros will be odd.

The operation of erasing indices is then a continuous surjective map $\X\arr\{\zero, \one\}^\infty$, which is one-to-one except for the sequences $w\in\{\zero, \one\}^\infty$ containing finitely many symbols $\one$, which have two preimages.

The action of the Grigorchuk group on $\{\zero, \one\}^\infty$ naturally lifts to the action on $\X$ given by the rules
\[a(\one\zero_x w)=\zero_{1-x}\zero_x w,\quad a(\one\one w)=\zero_1\one w,\quad a(\zero_x w)=\one w,\]
and
\begin{alignat*}{2}
b(\zero_{1-x}\zero_x w)&=\zero_1 a(\zero_x w),&\quad b(\zero_1\one\zero_xw)&=\zero_xa(\one\zero_x w),\\
b(\zero_1\one\one w)&=\zero_0 a(\one\one w),&\quad b(\one w)&=\one c(w),\\
c(\zero_{1-x}\zero_x w)&=\zero_1 a(\zero_x w),& c(\zero_1\one\zero_xw)&=\zero_xa(\one\zero_x w),\\
c(\zero_1\one\one w)&=\zero_0 a(\one\one w),&\quad c(\one w)&=\one d(w),\\
d(\zero_x w)&=\zero_x w,&\quad d(\one w)&=\one b(w),
\end{alignat*}

One can show that this action is expansive. The topological full group of the action is generated by $b, c, d$, and the restrictions of $a$ to three subsets of $\X$ corresponding to the three possible values of the \emph{second} coordinate of a sequence $w\in\X$:
\[\one\zero_0w\stackrel{a_0}{\longleftrightarrow}\zero_1\zero_0w,\quad\one\zero_1w\stackrel{a_1}{\longleftrightarrow}\zero_0\zero_1w,\quad\one\one w\stackrel{a_2}{\longleftrightarrow}\zero_1\one w.\]

Then, by the results of~\cite{nek:fullgr} and~\cite{matui:etale}, the derived subgroup of the full group $\langle b, c, d, a_0, a_1, a_2\rangle$ is simple. It has finite index in the full group, since the full group is generated by finitely many elements of order 2.

Instead of proving the above statements, we will describe the full group $\widehat G$ anew as a group defined by its orbital graphs, and prove that it is virtually simple directly using only the structure of its Schreier graphs.

The generating set of our group $\widehat G$ will be $S=\{a_0, a_1, a_2, b, c, d\}$.
Consider the following initial segments
\begin{align*}
I_1 &=\{a_2\}\{b, c\}\{a_0\},\\
J_1 &=\{a_2\}\{b, c\}\{a_1\},
\end{align*}
and for $n\ge 1$ define
\begin{align*}
I_{n+1}&= J_ne_nJ_n^{-1},\\
J_{n+1}&=J_ne_n I_n^{-1},
\end{align*}
where the connector $e_n$ is equal to $\{b, c\}$, $\{d, b\}$, $\{c, d\}$ if $n\equiv 0, 1, 2\pmod{3}$, respectively.  Note that $I_n$ is symmetric for $n\ge 2$.

For example, we have
\[I_2=\{a_2\}\{b, c\}\{a_1\}\{d, b\}\{a_1\}\{b, c\}\{a_2\},\quad J_2=\{a_2\}\{b, c\}\{a_1\}\{d, b\}\{a_0\}\{b, c\}\{a_2\}.\]
We see that $I_n$ and $J_n$ for $n\ge 0$ start and end with $\{a_2\}\{b, c\}$ and $\{b, c\}\{a_2\}$, and that each of the edges $\{a_0\}$ and $\{a_1\}$ is surrounded by $\{d, b\}$ on one side and $\{b, c\}$ on the other.

Let $\mathcal{S}$ be the subshift generated by the set of segments $I_n$, $J_n$, and let $\widehat G$ be the group it defines. Note that $a_0, a_1, a_2$ commute (they are never neighbors) and that the subgroup $\langle a, b, c, d\rangle\le\widehat G$, where $a=a_0a_1a_2$, has the same orbital graphs as the Grigorchuk group, hence it is isomorphic to it.

\begin{figure}
\includegraphics{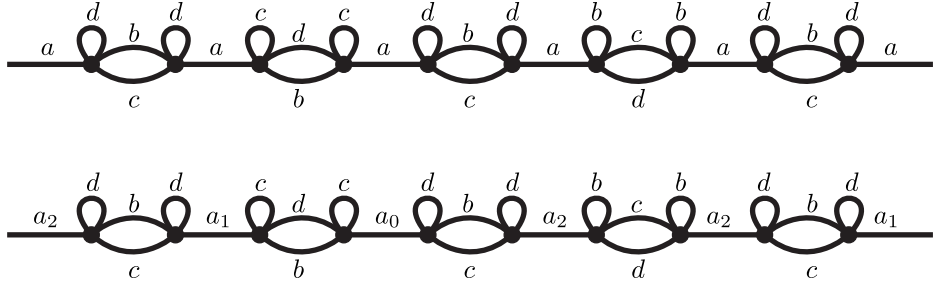}
\caption{Orbital graphs of $G$ and $\widehat G$}
\label{fig:schrgr}
\end{figure}

For every $n$ each orbital graph $\G_w$ is obtained by connecting infinitely many copies of $I_n$ and $J_n$ by connectors $e_0, e_1, e_2$ in some order. It follows 
from the description of $I_1$ and $J_1$ that the decomposition into copies of $I_1$ and $J_1$ is unique. This in turn implies that the decomposition of $\G_w$ into the copies of $I_n$ and $J_n$ is unique for every $n$. 

Let $H_n$, for $n\ge 2$, be the subgroup of elements $g\in\widehat G$ satisfying the following conditions:
\begin{enumerate}
\item Each copy of $I_n$ and $J_n$ in $\G_w$ is $g$-invariant.
\item The element $g$ commutes with isomorphisms between copies of $I_n$ and with isomorphisms between copies of $J_n$.
\item The action of $g$ on a copy of $I_n$ leaves both halves $J_{n-1}, J_{n-1}^{-1}$ of $I_n$ invariant.
\end{enumerate}

Denote by $S_{J_n}$ the symmetric group on the set of vertices of $J_n$.  It follows from the definition that $H_n$ is naturally identified with a subgroup of $S_{J_n}\times S_{J_{n-1}}$. Denote by $A_{J_n}$ the corresponding alternating subgroups.

\begin{proposition}
\label{pr:descriptionofHn}
The group $H_n$ contains $S_{J_n}\times S_{J_{n-1}}$ for every $n\ge 2$.
\end{proposition}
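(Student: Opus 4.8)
The plan is to show that $H_n$ contains enough transpositions to generate the full symmetric groups $S_{J_n}$ and $S_{J_{n-1}}$, and then to argue that these transpositions can be realized independently on the two factors. The basic mechanism is the one underlying the construction of alternating full groups: a generator of $\widehat G$ of the form $a_i$, $b$, $c$, or $d$ acts on an orbital graph $\G_w$ as a product of transpositions swapping adjacent vertices along edges with the corresponding label, so by conjugating such an involution by suitable group elements we can move a single transposition to any desired pair of adjacent vertices inside a prescribed copy of $I_n$ or $J_n$. First I would record that, because the decomposition of every $\G_w$ into copies of $I_n$ and $J_n$ is unique and $\widehat G$ acts on this combinatorial structure, an element that permutes only the vertices inside one fixed copy (fixing everything outside and respecting the internal edges appropriately) indeed lies in $\widehat G$: it is a finitely supported permutation realized by a word in the generators, since $\widehat G$ contains the alternating full group and, more to the point, every such local permutation is a composition of "elementary" swaps each of which is the restriction of some generator to a single edge, cut out using the expansiveness/separation of neighborhoods. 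This last point needs the fact from Section~\ref{s:simpleGrigorchuk} that the action is expansive and that clopen sets separating individual edges are available.

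Next, I would produce an explicit transposition in $H_n$. Inside a copy of $J_n$ there are edges labeled $\{b,c\}$, $\{d,b\}$, $\{c,d\}$, and $\{a_0\},\{a_1\},\{a_2\}$; picking an edge labeled, say, $b$ near the center of $J_n$ and restricting $b$ to that single edge gives an involution $\tau$ swapping its two endpoints and fixing all other vertices of $\G_w$. Then I would check the three defining conditions of $H_n$ for $\tau$: condition (1) is clear since $\tau$ is supported inside one copy of $J_n$; condition (2) requires that $\tau$ commute with the canonical isomorphisms between copies of $J_n$, which forces us to apply $\tau$ simultaneously to the corresponding edge in every copy of $J_n$ in $\G_w$ (and, via the way $J_{n-1}$ sits inside $I_n$ and $J_n$, to the matching edge in every copy of $J_{n-1}$ lying in the appropriate half) — this "symmetrized" transposition is again a finitely-supported-per-copy element and still lies in $\widehat G$ by the argument of the previous paragraph, now applied copy-by-copy; condition (3) holds because a transposition within one half does not mix the two halves $J_{n-1}, J_{n-1}^{-1}$ of a copy of $I_n$. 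So $H_n$ contains a transposition acting as a transposition on $S_{J_n}$ and as some involution (a transposition or the identity) on $S_{J_{n-1}}$.

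Having one transposition, I would conjugate it around. The group $\widehat G$ acts on $\G_w$ with the copies of $J_n$ forming a single orbit under the "isomorphism-respecting" part of the action, and within a copy of $J_n$ the remaining freedom (conjugation by elements of $H_n$ already constructed, or by generators moving one copy to an adjacent one) lets us carry the chosen edge to any other edge of $J_n$; since the underlying graph $J_n$ is connected, adjacency-transpositions along a spanning tree of $J_n$ generate $S_{J_n}$. The same applies to $J_{n-1}$. This shows $H_n$ surjects onto each of $S_{J_n}$ and $S_{J_{n-1}}$ separately. To get the full product $S_{J_n}\times S_{J_{n-1}}$ rather than just a subdirect product, I would exhibit a transposition in $H_n$ that is nontrivial on the $S_{J_n}$ factor but trivial on $S_{J_{n-1}}$: for this, choose the edge of $J_n$ to lie in the part of $J_n$ that is \emph{not} contained in any copy of $J_{n-1}$ — concretely, one of the edges near the outer endpoints labeled $\{a_2\}$ or the flanking $\{b,c\}$, or an edge straddling the two $J_{n-1}$-halves through the connector — so that its $H_n$-symmetrization touches only $J_n$-copies. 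Such a transposition, together with the full $S_{J_{n-1}}$ obtained above (realized by transpositions supported deep inside $J_{n-1}$-copies, which by condition (2) and the position of $J_{n-1}$ inside $J_n$ can be chosen to act as the identity outside those $J_{n-1}$-copies — here one uses that a small edge strictly inside a $J_{n-1}$-block corresponds under the isomorphisms to edges inside $J_{n-1}$-blocks only), shows that the image of $H_n$ in $S_{J_n}\times S_{J_{n-1}}$ contains $S_{J_n}\times\{1\}$ and $\{1\}\times S_{J_{n-1}}$, hence all of $S_{J_n}\times S_{J_{n-1}}$.

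The main obstacle is the bookkeeping in conditions (2) and (3): one must be careful that "restrict a generator to a single edge and then symmetrize over all copies" genuinely produces an element of $\widehat G$ and not merely of some larger full group, and that the symmetrization does not secretly force the transposition to act on $J_{n-1}$-copies when we want it trivial there (and conversely). This is controlled entirely by where the chosen edge sits relative to the nested decomposition $J_{n-1}\subset J_n$ and by the uniqueness of that decomposition; the uniqueness, already established in the text from the form of $I_1, J_1$, is what makes the symmetrization well-defined and keeps the two factors independent.
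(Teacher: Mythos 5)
There is a genuine gap, and it sits at the heart of your argument: you assume that the restriction of a generator to a single edge (and then its ``symmetrization'' over all copies of $J_n$) is an element of $\widehat G$, justifying this by expansiveness and by the claim that $\widehat G$ contains the alternating full group. But in this section $\widehat G$ is \emph{defined} as the group generated by the six letters $a_0,a_1,a_2,b,c,d$ acting on the orbital graphs of the subshift $\mathcal{S}$; the paper explicitly declines to prove the full-group statements (``Instead of proving the above statements, we will describe the full group $\widehat G$ anew as a group defined by its orbital graphs\dots'') and the present proposition is a key step toward them, so invoking full-group machinery here is circular. Moreover, by the basic observation of Section~2, an element of $\widehat G$ of word length $\ell$ moves each vertex according to the isomorphism type of its $\ell$-ball; since the subshift is minimal, every finite pattern recurs, so no element of $\widehat G$ can be supported on a single edge of an infinite orbital graph, and whether a prescribed symmetrized transposition is realized by \emph{some} word in the generators is exactly what has to be proved. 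The paper proves it by exhibiting explicit words and inducting on $n$: for the base case, $a_0$ and the commutators $[b,a_0],[c,a_0],[d,a_0]$ lie in $H_{J_2}$ and generate the symmetric group on the four vertices of $\{d,b\}\{a_0\}\{b,c\}$; further commutators with $b,c,a_1,a_2$ give $H_{J_2}=S_{J_2}\times\{1\}$, then $a_1$ and commutators with $b,c,a_2$ give $H_{I_2}$; the inductive step takes a letter $x\in e_n\setminus e_{n+1}$ and uses commutators $[x,H_{I_n}]$ together with $H_{J_n}$ to fill up $A_{J_{n+1}}$ and then $S_{J_{n+1}}$. Nothing in your proposal supplies a substitute for these explicit constructions.

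A secondary problem is your mechanism for separating the two factors $S_{J_n}\times\{1\}$ and $\{1\}\times S_{J_{n-1}}$: you propose to choose the supporting edge ``near the outer endpoints labeled $\{a_2\}$'' of $J_n$. But $I_n$ and $J_n$ have identical ends (both begin and end with $\{a_2\}\{b,c\}$) and share the entire prefix $J_{n-1}e_{n-1}$, so an edge near an end of $J_n$ has a neighborhood isomorphic, out to a large radius, to neighborhoods of edges in copies of $I_n$; by the locality property above, any element of $\widehat G$ realizing your transposition on $J_n$-copies would be forced to act nontrivially on $I_n$-copies as well. The combinatorial data that actually distinguishes the two kinds of blocks is the $a_0$ versus $a_1$ edge (the difference between $I_1$ and $J_1$) together with the adjacency of block ends to $e_n$ versus $e_{n+1}$, and this is precisely what the paper's commutator argument exploits; your proposal never uses it.
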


\begin{proof}
Denote by $H_{I_n}$ and $H_{J_n}$ the intersections of $H_n$ with $\{1\}\times S_{J_{n-1}}$ and $S_{J_n}\times \{1\}$, respectively.

It is easy to see that the element $a_0$ and the commutators $[b, a_0], [c, a_0], [d, a_0]$ belong to $H_{J_2}$. It follows that $H_{J_2}$ contains the symmetric group on the four vertices of the subsegment $\{d, b\}\{a_0\}\{b, c\}$ of $J_2$. Taking commutators of the elements of this symmetric group with $b, c, a_1, a_2$, and taking into account that $a_0\in H_{J_2}$, we conclude that $H_{J_2}=S_{J_2}\times\{1\}$.

We have $a_1\in H_2$. Since the projection of $H_2$ onto $S_{J_2}$ is surjective, it follows that $H_{I_2}$ contains the action of $a_1$ on the vertices of $I_2$. Taking commutators with $b, c, a_2$, we conclude that the elements of $H_{I_2}$ can permute the vertices of one half $\{a_2\}\{b, c\}\{a_1\}$ of $I_2$ by any permutation, i.e., that $H_{I_2}=S_{J_1}$. This finishes the proof for $n=2$.

Suppose that we know that proposition holds for $n$, and let us prove it for $n+1$.
Note that the right end of $J_n$ and the left end of $J_n^{-1}$ are adjacent in $\G_w$ to $\{e_n\}$ only.  Copies of $I_n$ appears in $\G_w$ only inside $J_{n+1}$, and their right end (the left end of $I_n^{-1}$, respectively) is adjacent only to $e_n$. Their left ends (the right ends of $I_n^{-1}$) are the right ends of $J_{n+1}$, hence they are always adjacent to $e_{n+1}$. One of the letters $b, c, d$, denote it $x$, belongs to $e_n$ but does not belong to $e_{n+1}$. Then $H_{I_n}\le H_{J_{n+1}}$ and $[x, H_{I_n}]\subset H_{J_{n+1}}$. Taking commutators of $H_{J_n}$ with the group generated by $[x, H_{I_n}]$ we show that the alternating group of permutations of the vertices of $J_{n+1}$ is contained in $H_{J_{n+1}}$. But $H_{I_n}$ contains odd permutations, hence $H_{J_{n+1}}=S_{J_{n+1}}$. It follows that $H_{I_{n+1}}$ contains the group $S_{J_n}$, hence $H_{n+1}=S_{J_{n+1}}\times S_{J_n}$.
\end{proof}

We obviously have $H_n<H_{n+1}$. Let $H_\infty$ be the union of the subgroups $H_n$.

\begin{proposition}
The derived subgroup $H_\infty'$ of $H_\infty$ is simple and has index 2 in $H_\infty$.
\end{proposition}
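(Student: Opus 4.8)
The plan is to make the inclusion $H_n\hookrightarrow H_{n+1}$ explicit under the identification $H_n=S_{J_n}\times S_{J_{n-1}}$ (which holds by Proposition~\ref{pr:descriptionofHn} together with the embedding $H_n\le S_{J_n}\times S_{J_{n-1}}$ noted above), and then to read off both assertions from the structure of the resulting directed system of products of finite symmetric groups. In this identification the first factor records the action of $g\in H_n$ on a copy of $J_n$ in $\G_w$, and the second its action on one of the two halves $J_{n-1}$ of a copy of $I_n$, the action on the other half being a relabelling of the first (by invariance under the flip automorphism of the symmetric segment $I_n$). First I would use the rules $J_{n+1}=J_ne_nI_n^{-1}$, $I_{n+1}=J_ne_nJ_n^{-1}$ and the symmetry of $I_n$ to see that in $\G_w$ a copy of $J_{n+1}$ splits into a copy of $J_n$ followed by a copy of $I_n$ (and the latter into two copies of $J_{n-1}$), while a copy of $I_{n+1}$ splits into two copies of $J_n$. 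Hence an element of $H_n$ with profile $(\sigma,\tau)\in S_{J_n}\times S_{J_{n-1}}$ becomes, as an element of $H_{n+1}=S_{J_{n+1}}\times S_{J_n}$, the pair $(\widetilde\sigma,\sigma)$, where $\widetilde\sigma\in S_{J_{n+1}}$ acts as $\sigma$ on the $J_n$-block of $J_{n+1}$ and as $\tau$, resp.\ as a relabelling $\tau'$ of $\tau$, on the two $J_{n-1}$-blocks of its $I_n$-block; in particular $\mathrm{sgn}(\widetilde\sigma)=\mathrm{sgn}(\sigma)$ because $\tau$ and $\tau'$ have the same sign. Two features matter: (a) the second factor at level $n$ is ``doubled'' inside the first factor at level $n+1$, so it contributes only even permutations there; and (b) the restriction of the inclusion to $S_{J_n}\times\{1\}$ is the diagonal $\sigma\mapsto(\sigma,\sigma)$, with $\sigma$ acting on the $J_n$-block in the first coordinate.

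For the index, observe that by (a) the number $\mathrm{sgn}(\sigma)$, where $(\sigma,\tau)$ is the profile of $g$ at any level $n$ with $g\in H_n$, does not depend on $n$, so $g\mapsto\mathrm{sgn}(\sigma)$ is a well-defined homomorphism $\mathrm{sgn}\colon H_\infty\arr\Z/2\Z$. It is onto (a transposition in $S_{J_n}\le H_n$ maps to the nontrivial element), and $\ker(\mathrm{sgn})\cap H_n=A_{J_n}\times S_{J_{n-1}}$. By the inclusion formula the image of $A_{J_n}\times S_{J_{n-1}}$ in $H_{n+1}$ lies in $A_{J_{n+1}}\times A_{J_n}=H_{n+1}'$ (here $\widetilde\sigma$ is even since $\sigma$ is, and the second coordinate $\sigma$ is even). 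Hence $\ker(\mathrm{sgn})=\bigcup_n\bigl(A_{J_n}\times S_{J_{n-1}}\bigr)\subseteq\bigcup_n H_{n+1}'=H_\infty'$, while $H_\infty'\subseteq\ker(\mathrm{sgn})$ is automatic; so $\ker(\mathrm{sgn})=H_\infty'$ and $[H_\infty:H_\infty']=2$.

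For simplicity, note that $H_\infty'=\bigcup_n H_n'$ (derived subgroups commute with directed unions) and $H_n'=A_{J_n}\times A_{J_{n-1}}$, so $H_\infty'=\bigcup_{n\ge 3}H_n'$ with both factors non-abelian simple once $n\ge 3$ (since $|J_{n-1}|\ge 8$). Recall that a product $A\times B$ of two non-abelian simple groups has only the normal subgroups $\{1\}$, $A\times\{1\}$, $\{1\}\times B$, $A\times B$. Let $1\ne N\trianglelefteq H_\infty'$ and pick $1\ne g\in N$, say $g\in H_n'$ with $n\ge 3$. If the first coordinate of $g$ is trivial, then by the inclusion formula its first coordinate one level up is nontrivial (it contains a relabelled copy of the nontrivial second coordinate), so after replacing $n$ we may assume $g=(g_1,g_2)$ with $g_1\ne1$. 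Then $N\cap H_n'$ is a normal subgroup of $A_{J_n}\times A_{J_{n-1}}$ that is neither trivial nor contained in $\{1\}\times A_{J_{n-1}}$, hence contains $A_{J_n}\times\{1\}$. By (b) its image in $H_{n+1}'=A_{J_{n+1}}\times A_{J_n}$ is a copy of $A_{J_n}$ with nontrivial projection to each of the two factors, so the normal subgroup of $A_{J_{n+1}}\times A_{J_n}$ generated by it is neither trivial nor contained in either factor, hence equals $H_{n+1}'$. Thus $H_{n+1}'\subseteq N$; repeating the argument with $A_{J_{n+1}}\times\{1\}$ in place of $A_{J_n}\times\{1\}$ gives $H_m'\subseteq N$ for all $m\ge n$, whence $N=H_\infty'$.

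I expect the main obstacle to be the first step: pinning down the inclusion $H_n\hookrightarrow H_{n+1}$, and in particular the doubling phenomenon (a). This is precisely what keeps the abelianization from growing---it stabilizes at $\Z/2\Z$ rather than $(\Z/2\Z)^n$---and at the same time supplies, at every stage, the odd permutations that collapse the potential Goursat diagonals arising in (b); once it is in hand, the remaining arguments are routine manipulations with directed unions of products of alternating groups.
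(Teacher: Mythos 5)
Your proof is correct and follows essentially the same route as the paper: it identifies $H_n$ with $S_{J_n}\times S_{J_{n-1}}$, analyzes the embedding $H_n\hookrightarrow H_{n+1}$ (the second coordinate doubled inside the first, the first copied diagonally), deduces $[H_\infty:H_\infty']=2$ from the parity of the first coordinate, and gets simplicity by showing a nontrivial normal subgroup meets some $H_m'=A_{J_m}\times A_{J_{m-1}}$ in a subgroup projecting nontrivially to both factors. Your version is merely a bit more explicit than the paper's (the sign homomorphism and the classification of normal subgroups of a product of two non-abelian simple groups are spelled out rather than used implicitly), but the underlying argument is the same.
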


\begin{proof}
The derived subgroup of $H_n$ is the direct product $A_{J_n}\times A_{J_{n-1}}$ of alternating groups of permutations of the vertices of $J_n$ and $J_{n-1}$. It follows that $H_n'$ is the union of the subgroups $A_{J_n}\times A_{J_{n-1}}$. Let $g\in H_n'$ be an arbitrary non-trivial element. Let $(g_1, g_2)\in A_{J_n}\times A_{J_{n-1}}$ be the corresponding element of the direct product of alternating groups. Let $(h_1, h_2)$ be the element of $A_{J_{n+1}}\times A_{J_n}$ representing $g\in H_{n+1}$. The set of vertices of $J_{n+1}$ is a union of a set in a bijection with the set of vertices of $J_n$ and a set in a bijection with the set of vertices of $I_n$. The permutation $h_1$ acts by $g_1$ on the first set and by $g_2$ on the second one. The permutation $h_2$ acts by copies of $g_1$ on two halves of the set of vertices of $I_{n+1}$. Consequently, for all $m\ge n+2$ the both coordinates of $g$ as an element of the direct product $A_{J_m}\times A_{J_{m-1}}$ are non-trivial. Consequently, the normal closure of $g$ in $H_m'$ is equal to $H_m'$. Hence, the normal closure of $g$ in $H_\infty'$ is the whole derived subgroup $H_\infty'$.

If $(k_1, k_2)\in(\Z/2\Z)^2$ is the parity of an element $g$ of $S_{J_n}\times S_{J_{n-1}}\cong H_n$, then the parity of $g$ as an element of $S_{J_{n+1}}\times S_{J_n}\cong H_{n+1}$ is $(k_1, k_1)$, since the permutation in the second coordinate $S_{J_{n-1}}$ is copied twice as a permutation of the second half $I_n$ of $J_{n+1}$. It follows that an element of $S_{J_n}\times S_{J_{n-1}}\cong H_n$ belongs to $H_\infty'$ if and only if its first coordinate is an even permutation. Consequently, $[H_\infty:H_\infty']=2$. An element of $H_\infty$ not belonging to $H_\infty'$ is, for example, $a_2$.
\end{proof}

Denote by $J_\infty$ the inductive limit of the segments $J_n$ with respect to the embedding of $J_n$ to the left part of $J_{n+1}$. Then $J_n$ is a right-infinite chain. Let $\xi$ be its right endpoint. The subshift $\mathcal{S}$ contains the graphs $\Lambda_n=J_\infty^{-1}e_nJ_\infty$ for all $n=0, 1, 2$. The smallest common cover of $\Lambda_n$ is the graph $\Xi$ obtained by taking four copies of $J_\infty$, and connect the copies of $\xi$ together by a Cayley graph of the Klein's four-group $\{1, b, c, d\}$. The four copies of $J_\infty$ in $\Xi$ can be denoted $(J_\infty, x)$ for $x\in\{1, b, c, d\}$ so that the left end of $J_\infty$ in the copy $(J_\infty, x)$ is connected by an edge labeled by $y$ to the left end of $(J_\infty, xy)$. See Figure~\ref{fig:Xi}, where $J_\infty$ is shown above $\Xi$.

\begin{figure}
\includegraphics{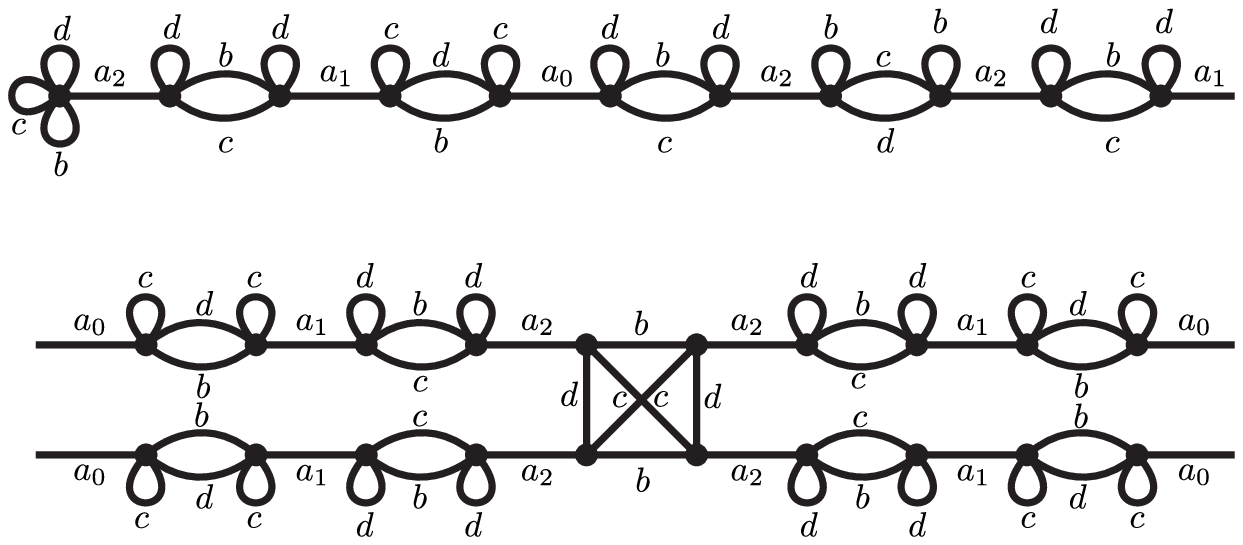}
\caption{Graphs $J_\infty$ and $\Xi$}
\label{fig:Xi}
\end{figure}

Since $\Xi$ covers the graphs $\Lambda_n$, the group $\widehat G$ naturally acts on the set of vertices of $\Xi$, i.e., $\Xi$ is a Schreier graph of $\widehat G$. In fact, $\Xi$ is the Schreier graph of $\widehat G$ with respect to the intersection of the stabilizers of the copies of the left end of $J_\infty$ in the Schreier graphs $\Lambda_n$.

\begin{proposition}
\label{pr:Hinftydescription}
An element $g\in\widehat G$ belongs to $H_\infty$ if and only if it leaves invariant the subsets $(J_\infty, x)$ of $\Xi$.
\end{proposition}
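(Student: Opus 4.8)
\emph{Sketch of proof.} Both inclusions are obtained by transporting information between $\Xi$ and the orbital graphs along the coverings $\pi_j\colon\Xi\to\Lambda_j$, $j=0,1,2$, which are $\widehat G$-equivariant. The combinatorial input is how the four copies $(J_\infty,x)$, $x\in\{1,b,c,d\}$, lie over $\Lambda_j=J_\infty^{-1}e_jJ_\infty$: since $e_0=\{b,c\}$, $e_1=\{d,b\}$, $e_2=\{c,d\}$ and $\{1,b,c,d\}$ is the Klein four-group with $bc=d$, for a letter $y\in e_j$ the $y$-labelled edge of $\Xi$ at $\xi_x$ goes to $\xi_{xy}$, while for $y\notin e_j$ it is a loop at $\xi_x$. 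Hence under $\pi_j$ each copy maps isomorphically onto one of the two half-lines of $\Lambda_j$, the copies being paired as $\{1,d\},\{b,c\}$ for $j=0$, as $\{1,c\},\{b,d\}$ for $j=1$, and as $\{1,b\},\{c,d\}$ for $j=2$.

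$(\Rightarrow)$ Let $g\in H_n$. Every copy of $I_n$ or $J_n$ begins and ends with $\{a_2\}\{b,c\}$ and $a_2\notin e_j$, so no such copy in $\Lambda_j$ crosses the central connector; thus each half-line of $\Lambda_j$ is a union of copies of $I_n$ and $J_n$, all $g$-invariant, so $g$ preserves each half-line of $\Lambda_j$. By equivariance of $\pi_j$, $g$ preserves the preimage of each half-line, i.e.\ the union of the corresponding pair of copies. Intersecting the three pairings, e.g.\ $(J_\infty,1)=\bigl((J_\infty,1)\cup(J_\infty,d)\bigr)\cap\bigl((J_\infty,1)\cup(J_\infty,c)\bigr)$, and similarly for $b,c,d$, we conclude that $g$ preserves every $(J_\infty,x)$.

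$(\Leftarrow)$ Let $g$ preserve every $(J_\infty,x)$ and have length $\ell$ in $S$; fix $n$ with $|J_{n-2}|$ much larger than $\ell$, and verify the three conditions defining $H_n$. Two facts are used. \emph{(i) Local model.} Unwinding $I_{n+1}=J_ne_nJ_n^{-1}$, $J_{n+1}=J_ne_nI_n^{-1}$ and using that $I_m$ is symmetric for $m\ge2$, one sees that every copy of $I_n$ or $J_n$, viewed from either endpoint, agrees with $J_\infty$ viewed from $\xi$ out to radius $|J_{n-2}|$. Consequently, near any connector of label set $e$ — an external one between two level-$n$ pieces, or the internal connector $e_{n-1}$ inside an $I_n$-piece — the ball $B_p(R)$ of a fixed radius $R$ of order $\ell$ about an endpoint $p$ of that connector in any orbital graph is isomorphic, as a rooted labelled graph, to the ball of radius $R$ about a central vertex of the $\Lambda_j$ whose $e_j$ has label set $e$; since the image of a vertex under a word of length $\le\ell$ depends only on its $\ell$-ball, this isomorphism intertwines the $g$-action on the ball of radius $R-\ell$. \emph{(ii) Uniformity on copies.} The element $g$ acts the same way on all four copies: writing $\theta_{xy}\colon(J_\infty,x)\to(J_\infty,y)$ for the unique root-preserving isomorphism, one has $\theta_{xy}\,g|_{(J_\infty,x)}=g|_{(J_\infty,y)}\,\theta_{xy}$. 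Indeed, for each $j$ the two copies over a common half-line of $\Lambda_j$ map isomorphically onto it, so the restrictions of $g$ to them are conjugated by the composite isomorphism, which is root-preserving and hence equals $\theta_{xy}$; the three pairings connect all four copies.

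Now take a copy $[P]$ of $I_n$ or $J_n$ in an orbital graph and $v\in[P]$. If $v$ is at distance $>\ell$ from both endpoints of $[P]$, then $B_v(\ell)\subseteq[P]$ and $g(v)\in[P]$. If $v$ is within $\ell$ of an endpoint $p$, carry $B_p(R)$ to the corresponding ball of $\Lambda_j$ by (i) and lift it through $\pi_j$ so that $v$ corresponds to a vertex $\tilde v$ of some $(J_\infty,x)$ near $\xi_x$; since $g$ preserves $(J_\infty,x)$ and displaces $\tilde v$ by at most $\ell$, $g(\tilde v)$ remains in $(J_\infty,x)$ near $\xi_x$, and pushing back down we get $g(v)\in[P]$, its position in $[P]$ being a fixed function of the position of $v$ — independent of the type of $[P]$, of the ambient graph, and of the connector at $p$, by (ii). This gives condition (1); the position statement gives condition (2), the case of isomorphisms between copies of the symmetric piece $I_n$ (in particular the reflection exchanging its two halves) again using (ii); and applying the same argument to the internal connector $e_{n-1}$ of each $I_n$-piece gives condition (3). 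Hence $g\in H_n\subseteq H_\infty$. \emph{The main difficulty} is fact (i): verifying that the local pictures in a general orbital graph, in $\Lambda_j$, and in $\Xi$ coincide, together with the accompanying bookkeeping that the relevant isomorphisms and the restrictions of $\pi_j$ really do intertwine the group action on the balls involved.
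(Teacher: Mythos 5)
Your proposal is correct and is essentially the paper's own argument: the converse direction is exactly the paper's (identify the $|J_{n-2}|$-scale neighbourhood of every connector with the centre of the appropriate $\Lambda_j$, lift through the covering to $\Xi$, and use invariance of the four copies $(J_\infty,x)$ both to exclude crossings of connectors and to make the action on the level-$n$ pieces uniform), while your forward direction merely spells out, via equivariance of the coverings $\pi_j$ and intersection of the three pairings of copies, the paper's one-line deduction that the sets $(J_\infty,x)$ are unions of $g$-invariant pieces. One small repair: the claim that no copy of $I_n$ or $J_n$ crosses the central connector of $\Lambda_j$ does not follow just from the pieces beginning and ending with $\{a_2\}\{b,c\}$ (the internal connector $e_{n-1}$ of $I_n$ is likewise flanked by $\{a_2\}$-edges on both sides), but it does follow from the uniqueness of the decomposition into level-$n$ pieces, already established in the paper, combined with the evident decomposition $\Lambda_j=\cdots e_{n+1}I_ne_nJ_n^{-1}\,e_j\,J_ne_nI_n^{-1}e_{n+1}\cdots$ in which the central $e_j$ is an external connector.
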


\begin{proof}
If $g\in H_\infty$, then there exists $n$ such that $g\in H_n$. But then $g$ leaves the copies of the segment $J_n, I_n$ invariant, hence it leaves also invariant the subsets $(J_\infty, x)$ of $\Xi$ which are unions of such segments.

Conversely, suppose that $g\in\widehat G$ leaves the subsets $(J_\infty, x)$ invariant. Note that each of the segments $I_n, J_n$ is of the form $J_{n-2}\ldots J_{n-2}^{-1}$. It follows that in the partition $\ldots X_1e_{k_1}X_2e_{k_2}\ldots$ of $\G_w$ into the copies $X_i$ of the segments $I_n, J_n$, the connectors $e_{k_i}$ are surrounded by $J_{n-2}^{-1}e_{k_i}J_{n-2}$. If the length of $g$ is shorter than the length of $J_{n-2}$, then any path describing the action of $g$ on $\G_w$ can be lifted to a path in $\Xi$ so that $e_{k_i}$ is covered by the Cayley graph of $\{1, b, c, d\}$ (i.e., by the central part connecting the copies of $J_\infty$). Since $g$ leaves the parts $(J_\infty, h)$ invariant, the points of $\G_w$ do not cross the connectors $e_{k_i}$ under the action of $g$. Moreover, the action of $g$ on each copy $X_i$ of the segments $I_n, J_n$ will depend only on the isomorphism class of $X_i$. It follows that $g\in H_\infty$.
\end{proof}

\begin{theorem}
\label{th:virtsimple}
The group $\widehat G$ is virtually simple.
\end{theorem}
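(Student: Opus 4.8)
The plan is to show that $\widehat G$ has a simple normal subgroup of finite index by identifying it with $H_\infty'$, or rather with a conjugate-closed enlargement of it. The key structural input is already assembled: $H_\infty$ is the subgroup of $\widehat G$ preserving the pieces $(J_\infty,x)$ of the cover $\Xi$ (Proposition~\ref{pr:Hinftydescription}), its derived subgroup $H_\infty'$ is simple of index~$2$ in $H_\infty$, and $H_\infty$ contains $S_{J_n}\times S_{J_{n-1}}$ for all $n$. What remains is a global argument: (i) $H_\infty$ has finite index in $\widehat G$, and (ii) a suitable subgroup built from $H_\infty'$ is normal in $\widehat G$ and simple.

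First I would bound the index $[\widehat G:H_\infty]$. Since $\widehat G$ acts on the four-element set of pieces $\{(J_\infty,x):x\in\{1,b,c,d\}\}$ of $\Xi$ and $H_\infty$ is the kernel of this action (by Proposition~\ref{pr:Hinftydescription}, $g\in H_\infty$ iff $g$ fixes each piece), we get $[\widehat G:H_\infty]\le 24$, in fact at most $|\mathrm{Sym}(4)|=24$, and one can pin it down exactly: the generators $b,c,d$ act on the central Klein four-group by left translation, so they already realize $V_4$ acting on itself, giving index at least~$4$; checking which permutations of the four pieces are actually induced (the $a_i$ fix all four pieces, being supported away from the left ends of the $J_\infty$'s) should give index exactly $8$, hence $[\widehat G:H_\infty]=8$ and $[\widehat G:H_\infty']=16$, matching the count of ``index $16$'' advertised in the introduction for the analogous family.

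Next I would produce the simple normal subgroup. The natural candidate is $N:=$ the normal closure of $H_\infty'$ in $\widehat G$; since $H_\infty$ has finite index, so does $N$. To see $N$ is simple it suffices to show $N=H_\infty'$, i.e. that $H_\infty'$ is already normal in $\widehat G$ (a finite-index subgroup that is normal and simple is what we want). For this I would use the conjugation action of the finite transversal: a generator $s\in S$ permutes the pieces $(J_\infty,x)$, hence conjugation by $s$ carries $H_\infty$ to itself (it is the kernel of the action on pieces, a characteristic-in-the-action subgroup), and therefore carries $H_\infty'$ to itself as well, since the derived subgroup is characteristic. Thus $H_\infty'\trianglelefteq\widehat G$. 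Being simple as an abstract group (the earlier proposition) and normal in $\widehat G$, it is a simple normal subgroup of finite index, so $\widehat G$ is virtually simple.

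The step I expect to be the genuine obstacle is the one glossed above: verifying that conjugation by each generator $s$ maps $H_\infty$ into itself. The clean way is exactly the piece-permutation argument — $H_\infty$ is the pointwise stabilizer of $\{(J_\infty,x)\}$ inside the setwise-stabilizer (all of $\widehat G$), hence normal in $\widehat G$ — but one must be careful that the characterization in Proposition~\ref{pr:Hinftydescription} really is the stabilizer of a $\widehat G$-set and not merely of an unstructured partition; this uses that $\widehat G$ genuinely acts on $\Xi$ (established before the proposition) and permutes the four $J_\infty$-pieces as blocks. Once that is in hand, normality of $H_\infty$ and hence of $H_\infty'$ is automatic, and the theorem follows by combining it with the simplicity/index-$2$ proposition and the finite-index computation.
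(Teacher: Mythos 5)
Your reduction hinges on the claim that $\widehat G$ acts on the four-element set of pieces $\{(J_\infty,x):x\in\{1,b,c,d\}\}$ with $H_\infty$ as the kernel, and this is exactly the step that fails: the pieces are \emph{not} blocks for the action of $\widehat G$ on the vertices of $\Xi$. Proposition~\ref{pr:Hinftydescription} says that $g\in H_\infty$ if and only if $g$ maps each piece into itself, but an element outside $H_\infty$ does not permute the pieces; it typically moves finitely many vertices of a piece into other pieces while leaving infinitely many vertices where they were. Already the generator $b$ does this: it sends the left end of $(J_\infty,x)$ to the left end of $(J_\infty,xb)$ (these left ends are joined by the Klein-group Cayley graph) but acts inside $(J_\infty,x)$ on all other vertices, so $b\bigl((J_\infty,x)\bigr)$ is not a piece. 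Hence there is no induced action on the four pieces, $H_\infty$ is not a kernel and not normal, and your bound $[\widehat G:H_\infty]\le 24$ is false. In fact $H_\infty$ cannot have finite index at all: it is the union of the finite groups $H_n\cong S_{J_n}\times S_{J_{n-1}}$, hence locally finite, so a finite-index (therefore finitely generated) copy of it inside the finitely generated group $\widehat G$ would be finite, forcing $\widehat G$ to be finite --- impossible, since $\widehat G$ contains the Grigorchuk group. For the same reason $H_\infty'$ is not normal in $\widehat G$: the simple finite-index subgroup is strictly larger than $H_\infty'$.

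The paper's proof is built precisely around quantifying this failure. One records, for $g\in\widehat G$ and a vertex $v$ of $J_\infty$, the element $\tau_g(v)\in\{1,b,c,d\}$ telling into which piece the lift of $v$ is moved; $\tau_g(v)=1$ for all but finitely many $v$, and the cocycle identity~\eqref{eq:taucocycle} together with commutativity of the Klein group makes $\phi(g)=\prod_v\tau_g(v)$ a homomorphism onto $\{1,b,c,d\}$. Then $H_\infty=\{g:\tau_g\equiv 1\}\subsetneq\ker\phi$, and the finite-index simple subgroup is $A$, the normal closure of $H_\infty'$ in $\widehat G$: one shows, using explicit elements such as $[a_2,b]$, $[a_2,c]$ (whose $\tau$ is nontrivial on exactly two vertices) and $2$-transitivity of $H_\infty'$ on $J_\infty$, that $\ker\phi$ is generated by $H_\infty$ together with $A$-conjugates of these commutators, so $A$ has index at most $2$ in $\ker\phi$, hence finite index in $\widehat G$; simplicity of $A$ then needs a further commutator argument (producing from any nontrivial $g\lhd$-generating element a nontrivial element of some $H_m'$ by commutating with a suitably supported $h\in H_m'$), plus the verification that $\widehat G$-conjugates of appropriate generators of $H_\infty'$ are already $A$-conjugates. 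None of these steps is available in your outline once the block-action claim is removed, so the argument as proposed does not go through.
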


\begin{proof}
Let $A=\langle{H_\infty'}^{\widehat G}\rangle$ be the normal closure of $H_\infty'$ in $\widehat G$. We will prove that it is a simple subgroup of finite index.

Let us prove first that $A$ has finite index.
For a vertex $v\in J_\infty$ and an element $g\in\widehat G$, denote by $\tau_g(v)$ the element of $\{1, b ,c, d\}$ such that the lift of the action of $G$ to $\Xi$ moves $(v, 1)$ to $(g(v), \tau_g(v))$. 
We have $\tau_g(v)=1$ for all but finitely many vertices of $J_\infty$. We have \begin{equation}
\label{eq:taucocycle}
\tau_{g_1g_2}(v)=\tau_{g_1}(g_2(v))\tau_{g_2}(v)
\end{equation}
for all $g_1, g_2\in\widehat G$ and all vertices $v$.

Denote by $\phi(g)$ the product of the values of $\tau_g(v)$ for all vertices of $J_\infty$. It follows from~\eqref{eq:taucocycle} and the fact that $\{1, b, c, d\}$ is commutative, that $\phi:\widehat G\arr\{1, b, c, d\}$ is a homomorphism. It is also easy to check that $\phi(b)=b, \phi(c)=c$, and $\phi(d)=d$, so $\phi$ is an epimorphism.

We have $H_\infty<\ker\phi$. Moreover, by Proposition~\ref{pr:Hinftydescription}, an element $g\in\widehat G$ belongs to $H_\infty$ if and only if $\tau_g(v)=1$ for all vertices $v$ of $J_\infty$

It is checked directly that $\phi_{[a_2, b]}(v)$ is equal to 1 for all vertices $v$ except for two of them ($\xi$ and $ba_2(\xi)$) where it is equal to $b$. Similarly, $\phi_{[a_2, c]}$ is equal to $c$ on two vertices and to $1$ everywhere else. 
Since $H_\infty'$ is 2-transitive on $J_\infty$, it follows that every element of $\ker\phi$ can be written as a product of an element of $H_\infty$ and elements of the form $[a_2, b]^h$ and $[a_2, c]^h$ for $h\in H_\infty'$.  Note that $[a_2, b], [a_2, c]\in A$, since $a_2\in H_\infty'$.
As $[H_\infty:H_\infty']=2$, this implies that $A$ is a subgroup of index at most $2$ in $\ker\phi$. Consequently, $A$ is a subgroup of finite index in $\widehat G$.

Let us show that $A$ is simple. Suppose that $N\lhd A$ is a non-trivial proper normal subgroup of $A$. Let $g\in N\setminus\{1\}$.
The element $g$ moves a vertex $v$ of $J_\infty^{-1}e_1 J_\infty$. There exists $n$ such that $v, g(v)$ belong to the central segment $J_n^{-1}e_1J_n$ of $J_\infty^{-1}e_1J_\infty$, and the distance from $v$ to the boundary of $J_n^{-1}e_1J_n$ is greater than the length of $g$.
We can find  an isomorphic copy of $J_n^{-1}e_1J_n$ inside $J_m$ for some large $m$ and an element $h\in H_m'$ moving $v$ but fixing pointwise all elements of the $|g|$-neighborhood of the boundary of $J_m$. Then the commutator $[h, g]$ is non-trivial and belongs to $H_m'$. Since $H_\infty'$ is simple, it follows that $N\ge H_\infty'$.

It remains to show that the normal closure of $H_\infty'$ in $\widehat G$ is equal to the normal closure of $H_\infty'$ in $A$. The group $H_\infty'$ is generated by the set of permutations $g\in\widehat G$ such that the $\langle g\rangle$-orbits are of lengths 1 and 3, and $g$ preserves the set of vertices of $J_\infty$ in $\Xi$. Moreover, for any $D$ we can take as a generating set a subset $C$ of this set such that for every $g\in C$ every two $\langle g\rangle$-orbits of length 3 are on distance at least $D$ from each other.

It follows from the arguments from the proof that $A$ is a subgroup of finite index in $\widehat G$ that a conjugate of an element of $C$ by an element of $\widehat G$ is equal to a conjugate by an element of $A$. Consequently, the group generated by ${H_\infty'}^A$ is equal to the group generated by ${H_\infty'}^{\widehat G}$.
\end{proof}

The intersection of $G$ with the simple finite index subgroup of $\widehat G$ has finite index in $G$. It is known that every subgroup of finite index in $G$ has a subgroup isomorphic to $G$ (see, for example~\cite[Section~12]{grigorchuk:branch}). Consequently, the simple finite index subgroup of $\widehat G$ contains an isomorphic copy the Grigorchuk group. So, we have embedded the Grigorchuk group into a simple finitely generated torsion group of intermediate growth $\widehat G$. The group $\widehat G$ is torsion and of intermediate growth by Theorem~\ref{th:intermediategrowth}.

\section{Uncountably many growth types}
We construct in this section, using the techniques of defining groups by their Schreier graphs, an uncountable family of virtually simple groups containing continuum of pairwise different growth types. 

Our set of generators will be $S=\{a_0, a_1, a_2, x, y, b, c, d\}$. We will use the same notation
\[e_1=\{b, c\},\quad e_2=\{d, b\},\quad e_3=\{c, d\}\]
as for the Grigorchuk group and the virtually simple group $\widehat G$.

Start with
\[I_0=\{a_0\}\{a_1\},\qquad J_0=\{a_0\}\{a_2\}.\]

Let $\alpha=(\alpha_0, \alpha_1, \ldots)$ be a sequence of symbols $\sigma$ or non-trivial elements of the free monoid $\{x, y\}^*$. Define the sequence of pairs of segments, associated with $\alpha$, by the following rule.

If $\alpha_n=\sigma$, then
\begin{eqnarray*}
I_{n+1}&=&J_ne_nJ_n^{-1},\\
J_{n+1}&=& J_n\{x\}I_n\{y\}I_n\{y\}J_n^{-1}.
\end{eqnarray*}
If $\alpha_n=t_1t_2\ldots t_m\in\{x, y\}^*$, then
\begin{eqnarray*}
I_{n+1}&=& J_ne_nJ_n^{-1},\\
J_{n+1}&=&J_ne_1I_n\{t_1\}I_n\{t_2\}I_n\{t_3\}\ldots\{t_m\}J_n^{-1}.
\end{eqnarray*}

Let $\mathcal{S}_\alpha$ be the subshift defined by the set of $\{I_n, J_n\}_{n\ge 0}$, and let $G_\alpha$ be the group defined by $\mathcal{S}_\alpha$.

Note that since every two connectors $e_n$ are disjoint, the subgroup $\{1, b, c, d\}$ of $G_\alpha$ is isomorphic to the Klein's four group for every $\alpha$.

\begin{proposition}
\label{pr:virtsimpalpha}
The group $G_\alpha$ is virtually simple for every sequence $\alpha$. More precisely, the derived subgroup $G_\alpha'$ is simple and $G_\alpha/G_\alpha'\cong (\Z/2\Z)^4$.
\end{proposition}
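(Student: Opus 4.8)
The plan is to mimic the structure of the analysis of $\widehat G$ in Section~\ref{s:simpleGrigorchuk}. The key combinatorial input is that each segment $I_n$, $J_n$ has a unique decomposition into copies of $I_0, J_0$ (this follows as before from the fact that $I_0=\{a_0\}\{a_1\}$ and $J_0=\{a_0\}\{a_2\}$ are the only places where $a_0, a_1, a_2$ occur, and from the fact that the connectors $e_1, e_2, e_3, \{x\}, \{y\}$ never involve $a_0, a_1, a_2$), hence every orbital graph $\G_w$ has a canonical partition into copies of $I_n$ and $J_n$ for every $n$. I would then introduce the analogue of $H_n$: the subgroup of $g\in G_\alpha$ that leave each copy of $I_n$ and $J_n$ invariant, commute with the isomorphisms between copies of the same segment, and preserve the two outermost halves $J_{n-1}$, $J_{n-1}^{-1}$ of each copy of $I_n$ (and, in the $\alpha_n=\sigma$ case, similarly the outermost $J_{n-1}, J_{n-1}^{-1}$ of $J_{n+1}$). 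As before, $H_n$ embeds into a product of symmetric groups on the vertex sets of the relevant segments, and the first main step is to prove the analogue of Proposition~\ref{pr:descriptionofHn}, namely that $H_n$ contains the full product of those symmetric groups. The base case uses that $a_0$ (or suitable commutators such as $[b,a_0]$, $[c,a_0]$) generates a symmetric group on a short subsegment around an $\{a_0\}$-edge, and then one spreads this by taking commutators with $b, c, d, a_1, a_2, x, y$; the inductive step is exactly the commutator argument of Proposition~\ref{pr:descriptionofHn}, using that the innermost connectors are surrounded by $J_{n-2}^{-1}e\,J_{n-2}$ and that one letter of $e_n$ drops out of the next connector, so that the two factors leak into each other and one obtains at least the alternating groups, then odd permutations because the relevant $H_I$ contains them.

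The second step is to take $H_\infty=\bigcup_n H_n$ and show, exactly as in the proposition following Proposition~\ref{pr:descriptionofHn}, that $H_\infty'=\bigcup_n (A_{J_n}\times A_{J_{n-1}})$ is simple: any nontrivial $g$ has, after passing to $H_m$ for $m\ge n+2$, both coordinates nontrivial (because the second coordinate of $H_n$ gets copied into both halves of the first coordinate of $H_{n+1}$), so its normal closure is everything. The third step is the finite-index argument in the spirit of Theorem~\ref{th:virtsimple}. Here the covering graph $\Xi$ must be enlarged: besides the Klein four-group $\{1,b,c,d\}$ acting at the limit point $\xi$ of $J_\infty$, the presence of the extra letters $x, y$ forces us to track an additional $\Z/2\Z$ for each of $x$ and $y$ recording the ``parity'' of how often a vertex crosses an $\{x\}$- or $\{y\}$-edge, yielding a cocycle $\tau_g$ with values in an abelian group of order $16$ and a homomorphism $\phi\colon G_\alpha\to(\Z/2\Z)^4$ (the target being generated by the images of $b$, $c$ and the two parity characters, equivalently by $b,c,x,y$ modulo the kernel; one checks $\phi$ is onto). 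One shows $H_\infty\le\ker\phi$, that $\ker\phi$ is generated by $H_\infty$ together with conjugates of commutators like $[a_2,b]$, $[a_2,c]$ and the analogous $x$-, $y$-commutators, all of which lie in $\langle {H_\infty'}^{G_\alpha}\rangle=:A$, and that $[H_\infty:H_\infty']$ is $2$ (or whatever small power of $2$ it turns out to be), giving $[\, \widehat{}\,]\,[G_\alpha:A]<\infty$; combining with $G_\alpha'\supseteq A$ and $G_\alpha/G_\alpha'$ abelian generated by the images of the eight generators, a direct check that exactly $b,c,x,y$ survive (the $a_i$ and $d$ becoming trivial or expressible) will pin down $G_\alpha/G_\alpha'\cong(\Z/2\Z)^4$ and force $A=G_\alpha'$, hence $G_\alpha'$ simple.

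Finally, one checks the normal closure of $H_\infty'$ in $G_\alpha$ equals its normal closure in $A$, exactly as at the end of the proof of Theorem~\ref{th:virtsimple}: $H_\infty'$ is generated by elements with orbits of length $1$ and $3$ supported on $(J_\infty,\cdot)$, one may assume the length-$3$ orbits of each generator are pairwise far apart, and then any conjugate by an element of $G_\alpha$ agrees with a conjugate by an element of $A$ since $A$ has finite index and acts with bounded ``displacement'' on the relevant balls. The main obstacle is the bookkeeping in step three: getting the cocycle $\tau_g$ and the enlarged covering graph $\Xi$ right when both the $\sigma$-rule and the word-rule for $\alpha_n$ can occur (so the local picture of $J_{n+1}$ around its endpoints varies with $n$), and then verifying precisely which of the eight generators remain nontrivial in the abelianization so that the count comes out to $(\Z/2\Z)^4$ and $A=G_\alpha'$ exactly rather than merely up to finite index.
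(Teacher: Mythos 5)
Your outline follows the same architecture as the paper's proof: uniqueness of the decomposition into copies of $I_n$, $J_n$, the subgroups $H_n\le S_{J_n}\times S_{J_{n-1}}$ built up by commutator bootstrapping, $H_\infty=\bigcup_n H_n$, an enlarged cover $\Xi$ with a cocycle $\tau_g$ valued in the abelian group $K=\langle x,y,b,c,d\rangle\cong(\Z/2\Z)^4$, the epimorphism $\phi\colon G_\alpha\to K$, and the Theorem~\ref{th:virtsimple}-style arguments that every nontrivial normal subgroup of $\ker\phi$ contains $H_\infty$ and that the relevant normal closures in $\ker\phi$ and in $G_\alpha$ coincide. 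All of that matches.

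The genuine gap is in the parity bookkeeping, and it is exactly the point you hedge on. First, the Section~\ref{s:simpleGrigorchuk} mechanism ``one obtains odd permutations because the relevant $H_I$ contains them'' does not transfer: in the present substitution each copy of $J_{n+1}$ contains an \emph{even} number of copies of $J_{n-1}$ (two copies of $I_n$ when $\alpha_n=\sigma$, and $2m$ copies of $J_{n-1}$ in general), and an element of $H_{I_n}$ acts by the same permutation on all of them, hence induces an \emph{even} permutation of the vertices of $J_{n+1}$. The paper therefore only proves the alternating containment $A_{J_n}\times A_{J_{n-1}}\le H_n$ by induction, and recovers the full symmetric groups by a separate observation: since the numbers of copies of $J_n$ and of $J_{n-1}$ inside $J_{n+1}$ are even, any element of $S_{J_n}\times S_{J_{n-1}}$ acts evenly two levels up, hence lies in $A_{J_{n+2}}\times A_{J_{n+1}}\le H_{n+2}$, which shows both that $H_n=S_{J_n}\times S_{J_{n-1}}$ and that $H_\infty$ is the direct limit of the alternating products, i.e.\ $H_\infty'=H_\infty$ is \emph{perfect}. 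Leaving ``$[H_\infty:H_\infty']$ is $2$ (or whatever small power of $2$)'' open is not a harmless detail: perfectness is what makes $\ker\phi$ equal to the normal closure of $H_\infty$ (no auxiliary commutators $[a_2,b]$, $[a_2,c]$ are needed here), and it is what kills $a_0,a_1,a_2$ in the abelianization (they lie in $H_\infty=H_\infty'\le G_\alpha'$), so that exactly $b,c,x,y$ survive and $G_\alpha/G_\alpha'\cong(\Z/2\Z)^4$ with $G_\alpha'=\ker\phi$ simple --- indeed the paper deduces the abelianization from the simplicity of $\ker\phi$ ($K$ abelian gives $G_\alpha'\le\ker\phi$, and $G_\alpha'$ is a nontrivial normal subgroup of the simple group $\ker\phi$), rather than computing it directly as you propose. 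As written, your ``direct check'' has no mechanism for showing the $a_i$ die, and if your hedge $[H_\infty:H_\infty']=2$ were the actual situation your argument would only yield virtual simplicity, not the precise statement. Once you insert the evenness argument and the resulting perfectness of $H_\infty$, the rest of your plan coincides with the paper's proof.
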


\begin{proof}
For every $n\ge 0$ every element of the subshift $\mathcal{S}_\alpha$ is obtained by concatenating isomorphic copies of the segments $I_n$, $J_n$ using the connectors $e_1, e_2, e_3$, $\{x\}$, and $\{y\}$. It is easy to see that the partition into the copies of the segments $I_0$ and $J_0$ is unique. Since the segment $I_{n+1}$ is the only segment of the form $J_ne_nJ_n^{-1}$, it follows by induction that the partition into the copies of $I_n$ and $J_n$ is unique for every $n\ge 0$.

Each segment $I_n$ consists of two copies of $J_{n-1}$. Therefore, we get a canonical partition of each graph $\G\in\mathcal{S}_\alpha$ into isomorphic copies of $J_n$ and $J_{n-1}$.

Similarly to Section~\ref{s:simpleGrigorchuk}, denote for $n\ge 1$ by $H_n$ the subgroup of all elements $g\in G_\alpha$ preserving the set of vertices of each copy of $J_n$ and $J_{n-1}$ of the partition and commuting with all isomorphisms between the copies of $J_n$ and between the copies $J_{n-1}$.  The group $H_n$ is naturally identified with a subgroup of the direct product $S_{J_n}\times S_{J_{n-1}}$ of symmetric groups on the sets of vertices of $J_n$ and $J_{n-1}$.
Denote by $H_{J_n}$ and $H_{I_n}$ the intersections of $H_n$ with $S_{J_n}\times\{1\}$ and $\{1\}\times S_{J_{n-1}}$. 

Note that $a_1, [a_0, a_1]$ generate a subgroup of $G_\alpha$ isomorphic to $S_3$, preserving the vertices of each copy of $I_0$, acting trivially on the copies of $J_0$, and commuting with isomorphisms between the copies of $I_0$. Similarly, $a_2, [a_0, a_2]$ generate a subgroup of $G_\alpha$ acting as the full symmetric group on the sets of vertices of the copies of $J_0$.

Let us prove at first that $H_n$ contains the direct product $A_{J_n}\times A_{J_{n-1}}$ of the groups of even permutations of the sets of vertices of $J_n$ and $J_{n-1}$, i.e., that $H_{J_n}$ contains the alternating group on $J_n$ and $H_{I_n}$ contains the alternating group on $J_{n-1}$. Let us prove this by induction. Both the inductive step and the base case $n=1$ will be proved at the same time.

Let $v_1, v_2, \ldots, v_m$ be the vertices of $I_n$ listed in the order they appear in the segment. Then $H_n$ contains the permutation $h=(v_1, v_2)(v_{m-1}, v_m)$ for $n\ge 1$ or the permutation $(v_1, v_2)$ for $n=0$, acting identically on the copies of $J_n$. Consider the commutator $[x, h]$ if $\alpha_n=\sigma$ or the commutator $[b, h]$ if $\alpha_n\ne\sigma$. This commutator will be a cycle of length 3 on the set of vertices of $J_{n+1}$ and will act identically on the vertices of $I_{n+1}$. Conjugating it by elements of $H_{J_n}$, by elements of $H_{I_n}$, and by $x, y$, $b, c, d$, we will get enough 3-cycles to generate $A_{J_{n+1}}\le H_{J_{n+1}}$. The group $H_{I_{n+1}}$ will contain $H_{J_n}$ by the inductive assumption (or by the fact that $\langle a_2, [a_0, a_2]\rangle$ is $S_3$). This finishes the inductive argument.

Let $h_1\in S_{J_n}, h_2\in S_{J_{n+1}}$ be arbitrary permutations. Since there are even numbers of copies of $J_n$ and $J_{n-1}$ in $J_{n+1}$, they will induce an even permutation of the set of vertices of $J_{n+1}$. By the same argument, they will induce an even permutation of the set of vertices of $J_{n+2}$. It follows that the corresponding permutation belongs to $A_{J_{n+1}}\times A_{J_{n+2}}\le H_{n+2}$. Consequently, $S_{J_n}\times S_{J_{n-1}}$ is contained in $G_\alpha$, hence $H_n$ coincides with the whole group $S_{J_n}\times S_{J_{n-1}}$.

Denote by $H_\infty$ the union of the groups $H_n$. Note that we have shown that $H_\infty$ is isomorphic to the direct limit of the direct products of the alternating groups $A_{J_n}\times A_{J_{n-1}}$, which implies that $H_\infty$ is perfect.

Let $J_\infty$ be the limit of $J_n$ with respect to the embedding of $J_n$ to the left end of $J_{n+1}$. It follows from the recursions defining the segments $I_n$ and $J_n$ that the subshift $\mathcal{S}_\alpha$ contains the graphs $J_\infty^{-1}zJ_\infty$ for each $z\in\{\{x\}, \{y\}, e_1, e_2, e_3\}$, since the corresponding finite segments $J_n^{-1}zJ_n$ are sub-segments of $I_m$ and $J_m$ for all $m\ge n+3$. The same arguments as in the proof of Proposition~\ref{pr:Hinftydescription} show that an element $g\in G_\alpha$ belongs to $H_\infty$ if and only if it leaves invariant the sets of vertices of $J_\infty$ and $J_\infty^{-1}$ in the graphs $J_\infty^{-1}zJ_\infty$ for every $z\in\{\{x\}, \{y\}, e_1, e_2, e_3\}$. Note that $x$, $y$, $b$, $c$, $d$ generate a group $K$ isomorphic to $(\Z/2\Z)^4$, as they commute with each other. Similarly to the proof of Theorem~\ref{th:virtsimple}, define $\tau_g(v)$ for a vertex $v$ of $J_\infty$ to be the product $h_1h_2h_3\in K$ of the elements $h_1\in\langle x\rangle$, $h_2\in\langle y\rangle$, $h_3\in\{1, b, c, d\}$, where $h_1$ is equal to $x$ if $v$ is moved by $g$ from $J_\infty$ to $J_\infty^{-1}$ in $J_\infty^{-1}\{x\}J_\infty$ and to $1$ otherwise, $h_2$ is equal to $y$ if $v$ is moved by $g$ from $J_\infty$ to $J_\infty^{-1}$ in $J_\infty^{-1}\{y\}J_\infty$ and to $1$ otherwise, and $h_3$ describes to which branch $(J_\infty, h)$ the vertex $v$ is moved by $g$ in the graph $\Xi$ obtained by gluing the copies of $J_\infty$ together along the endpoint of $J_\infty$ by the Cayley graph of $\{1, b, c, d\}$.

Then $g\in H_\infty$ if and only if $\tau_g$ is constant $1$. Let $\phi(g)$ be the product of the values of $\tau_g(v)$ over all vertices $v$ of $J_\infty$. Then, by the same arguments as in the proof of Theorem~\ref{th:virtsimple}, $\phi:G_\alpha\arr K$ is an epimorphism and its kernel is equal to the normal closure of $H_\infty$ in $G_\alpha$ (we are in a better situation here than for Theorem~\ref{th:virtsimple}, since $H_\infty'=H_\infty$ now).

The same argument as in the proof of Theorem~\ref{th:virtsimple} show that for every non-trivial element $g\in\ker\phi$ the normal closure of $g$ in $\ker\phi$ contains $H_\infty$ and that the normal closure of $H_\infty$ in $\ker\phi$ and in $G_\alpha$ coincide, which proves that $\ker\phi$ is simple. Note that this implies that $\ker\phi=G_\alpha'$, since $K$ is commutative.
\end{proof}

\begin{proposition}
\label{pr:intermediatelong}
Let $\alpha=(\alpha_1, \alpha_2, \ldots)$ be a sequence such that $\alpha_n=\sigma$ for all $n$ big enough. Then there exist $C_1, C_2$ such that the growth of $G_\alpha$ satisfies $\gamma_{G_\alpha}(n)\le \exp(C_1/\exp(C_2\sqrt{\log n}))$ for all $n\ge 1$.

For every finite sequence $(\alpha_1, \alpha_2, \ldots, \alpha_k)$ of symbols $\sigma$ and elements of $\{x, y\}^*$ and for every $R\ge 1$ there exists $n$ such that for every sequence \[\alpha=(\alpha_1, \alpha_2, \ldots, \alpha_k, \underbrace{\sigma, \sigma, \ldots, \sigma}_{\text{$n$ times}}, \alpha_{k+n+1}, \alpha_{k+n+2}, \ldots)\]
the ball of radius $R$ in the Cayley graph of $G_\alpha$ is isomorphic to the ball of radius $R$ of the group $G_{(\alpha_1, \alpha_2, \ldots, \alpha_k, \sigma, \sigma, \sigma, \ldots)}$.
\end{proposition}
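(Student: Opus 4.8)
The plan is to prove the two assertions separately, as they have rather different flavors: the first is a growth estimate that reduces to an earlier theorem, and the second is a ``finite-dependence'' statement about the Schreier graphs that follows from the substitutional structure.

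For the first assertion, I would argue that if $\alpha_n=\sigma$ for all $n\ge N$, then the tail of the substitution agrees with the substitution of Example~\ref{ex:Grigorchukgroupgraphs} (up to the relabeling of $\{a\}$ into $\{a_0\}$, $\{a_1\}$, $\{a_2\}$ and the extra generators $x,y$ appearing only in the first $N$ steps). More precisely, I would check that the sets $X_n=\{I_n, J_n\}$ satisfy the hypotheses of Proposition~\ref{pr:linrepetitive}: condition (1) holds since each $I_{n+1}$, $J_{n+1}$ is a concatenation of copies of $I_n$, $J_n$ with connectors (which can be absorbed into the alphabet of $(n-1)$st generation segments, as remarked after the definition of substitutions); condition (3) holds because $|X_n|=2$ and the number of factors $m$ is bounded (it equals $2$ or $3$ for $n\ge N$); condition (2), the ``two-block determinacy'', holds for $n\ge N$ with a fixed $k$ because once $\alpha_n=\sigma$ the admissible adjacencies of blocks $I_n$, $J_n$ are rigidly controlled by the connectors $e_n$ cycling through $e_1,e_2,e_3$ exactly as in the Grigorchuk case. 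Hence $\mathcal{S}_\alpha$ is linearly repetitive. Then I would verify that $\mathcal{S}_\alpha$ contains three sequences of the form $\Lambda_1,\Lambda_2,\Lambda_3$ with the symmetry and central-connector structure required by Theorem~\ref{th:intermediategrowth} — namely $J_\infty^{-1}e_iJ_\infty$ for $i=1,2,3$, with $e_i\in\{\{b,c\},\{d,b\},\{c,d\}\}$ — and that the restriction of $G_\alpha$ to these graphs, together with its smallest common cover $\Xi$, is exactly the configuration analyzed in~\cite[Theorem~6.6]{nek:burnside}. The stated bound $\exp(C_1 n/\exp(C_2\sqrt{\log n}))$ (the statement omits the $n$; I would restore it) then follows directly from Theorem~\ref{th:intermediategrowth}, and torsion/infiniteness come for free.

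For the second assertion I would exploit the fact, established in the proof of Proposition~\ref{pr:virtsimpalpha}, that for each $m$ every graph in $\mathcal{S}_\alpha$ is a concatenation of copies of $I_m$ and $J_m$, so a ball of radius $R$ in any Cayley graph is assembled from a bounded-size neighborhood of blocks of generation $m$ once $m$ is large enough that the diameter of $I_m$ and $J_m$ exceeds $R$ — and here the key point is that $I_{m+1}$, $J_{m+1}$ \emph{depend only on} $I_m$, $J_m$ and the single symbol $\alpha_m$. Concretely: pick $m_0$ so large that $I_{m_0}$, $J_{m_0}$ have combinatorial diameter $>2R+1$ (this $m_0$ depends only on $R$ and on $(\alpha_1,\dots,\alpha_k)$, since that finite initial data together with $n$ ``$\sigma$''s determines all blocks up to generation $k+n$, and lengths grow at least like $2^n$). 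Set $n=m_0-k$ if $m_0>k$, else $n=1$. For any $\alpha$ of the displayed form and any $\beta=(\alpha_1,\dots,\alpha_k,\sigma,\sigma,\dots)$, the blocks $I_j$, $J_j$ coincide for all $j\le k+n=m_0$ because the first $m_0$ substitution rules are identical. An element $g$ of length $\le R$ of $G_\alpha$ acts on a vertex $v$ only through the isomorphism type of $B_v(R)$ (the ``easy observation'' near the end of Section~2.1); since $B_v(R)$ sits inside a single copy of $I_{m_0}$ or $J_{m_0}$ or straddles at most one connector $J_{m_0-2}^{-1}eJ_{m_0-2}$ with $e\in\{e_1,e_2,e_3,\{x\},\{y\}\}$, and all such local pictures are already realized in both $\mathcal{S}_\alpha$ and $\mathcal{S}_\beta$, the multiplication tables of the balls of radius $R$ in the Cayley graphs of $G_\alpha$ and $G_\beta$ agree. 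I would phrase this last step as: the map on $S^{\le R}$ given by ``the permutation of a fixed large copy of $I_{m_0}$ (resp.\ $J_{m_0}$) induced by a word of length $\le R$ based near its center'' is a complete invariant of the radius-$R$ ball, and it is the same for $\alpha$ and $\beta$.

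The main obstacle is the verification of condition (2) of Proposition~\ref{pr:linrepetitive} (the bounded-lookahead block-recognizability) uniformly once $\alpha_n=\sigma$: one must check that the only blocks $z\in X_n$ that can legally sit to the right of $I_n$ or of $J_n$ in an element of the subshift are exactly those that occur inside some $I_{n+k}$, $J_{n+k}$, and that the requisite $k$ does not drift with $n$. This is where the specific combinatorics of the connectors $e_n$ — in particular that each of $\{a_0\}$, $\{a_1\}$ is flanked by $\{d,b\}$ on one side and $\{b,c\}$ on the other (the remark after the definition of $I_2$, $J_2$), and that $I_n$, $J_n$ begin and end with $\{a_2\}\{b,c\}$ — must be used to pin down adjacencies; once $\alpha$ is eventually $\sigma$ this is a finite check essentially identical to the Grigorchuk case treated in~\cite{nek:burnside}. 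The second assertion is, by contrast, almost purely formal once one is careful about how large $n$ must be chosen relative to $R$ and to the growth rate of block lengths.
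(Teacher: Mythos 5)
Your proposal is correct and follows essentially the same route as the paper: the growth bound is deduced from Theorem~\ref{th:intermediategrowth} after checking linear repetitivity via Proposition~\ref{pr:linrepetitive} and the presence of the three symmetric sequences $J_\infty^{-1}e_iJ_\infty$ in $\mathcal{S}_\alpha$ (and you rightly restore the missing $n$ in the stated bound), while the second claim is proved, as in the paper, by choosing $n$ so large that the blocks $I_m, J_m$ common to both sequences exceed (a fixed multiple of) $R$ in length, whence the two subshifts have the same bounded-size local pictures and therefore the same relations among words of length at most $R$. One minor remark: your motivational claim that the eventual-$\sigma$ tail ``agrees with the substitution of Example~\ref{ex:Grigorchukgroupgraphs}'' is not literally accurate, since the $\sigma$-rule for $J_{n+1}$ uses the connectors $\{x\}$ and $\{y\}$ at every step, but your actual verification does not rely on that identification.
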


\begin{proof}
The first statement follows directly from Theorem~\ref{th:intermediategrowth}. 

Let us prove the second statement. For every $(\alpha_1, \alpha_2, \ldots, \alpha_k)$ and $R$ there exists $n$ such that every for every segment $I_m, J_m$ for $m\ge k+n$ has length more than $R$. 

Note that the set of segments of length $R$ in $I_{m+1}$ and $J_{m+1}$ do not depend then of the symbol $\alpha_m$. Consequently, for every \[\alpha=(\alpha_1, \alpha_2, \ldots, \alpha_k, \underbrace{\sigma, \sigma, \ldots, \sigma}_{\text{$n$ times}}, \alpha_{k+n+1}, \alpha_{k+n+2}, \ldots)\] the set of segments of length $R$ in the graphs belonging to $\mathcal{S}_\alpha$ coincides with the set of segments of length $R$ in the graphs belonging to $\mathcal{S}_{\alpha'}$ for for $\alpha'=(\alpha_1, \alpha_2, \ldots, \alpha_k, \sigma, \sigma, \ldots)$. It follows that an equality $g_1=g_2$  for products of length $\le R$ of the generators $x, y, b, c, d, a_0, a_1, a_2$ are true or false in $G_\alpha$ and $G_{\alpha'}$ at the same time, i.e., the groups $G_\alpha$ and $G_{\alpha'}$ have isomorphic balls of radius $R$ in their Cayley graphs.
\end{proof}

\begin{proposition}
\label{pr:exponentiallong}
For every finite sequence $(\alpha_1, \alpha_2, \ldots, \alpha_{k-1})$ there exists $M$ such that for every $N$ there exists a word $w\in\{x, y\}^*$ such that for every infinite sequence $\alpha=(\alpha_1, \alpha_2, \ldots, \alpha_{k-1}, w, \alpha_{k+1}, \alpha_{k+2}, \ldots)$, the growth of the group $G_\alpha$ satisfies
\[\gamma_{G_\alpha}(Mn)\ge 2^n\]
for all $n=1, \ldots, N$.
\end{proposition}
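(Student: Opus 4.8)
The plan is to follow Grigorchuk's argument for uncountably many growth types in~\cite{grigorchuk:growth_en}, transposed to Schreier graphs: in the family $\{G_\alpha\}$ a long block of symbols $\alpha_n=\sigma$ keeps the growth sub-exponential on the corresponding scales (Proposition~\ref{pr:intermediatelong}), while a single long ``active'' symbol $\alpha_k=w$ forces it to be at least exponential there. First fix $(\alpha_1,\dots,\alpha_{k-1})$; this already determines the segments $I_k$ and $J_k$ (recall that $I_{n+1}=J_ne_nJ_n^{-1}$ is independent of $\alpha_n$). Set $L=|I_k|$, $D=|J_k|$, and let $M$ be a constant depending only on $L$ and $D$, to be pinned down at the end. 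Given $N$, take $w=t_1t_2\cdots t_m$ with $m\ge N$ (any such word will do). Then $J_{k+1}=J_ke_1\,I_kt_1I_kt_2\cdots I_kt_m\,J_k^{-1}$ is a chain of $m$ copies $C_1,\dots,C_m$ of $I_k$ joined by the single edges $t_1,\dots,t_m$, and for every continuation $\alpha_{k+1},\alpha_{k+2},\dots$ copies of $J_{k+1}$ recur in every $\G\in\mathcal{S}_\alpha$. I would reduce the statement to the following: for each $n\le N$ there are at least $2^n$ elements of $G_\alpha$ of word length $\le Mn$ whose actions on one fixed copy of $J_{k+1}$ are pairwise different. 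Indeed, elements acting differently on a graph in $\mathcal{S}_\alpha$ are distinct in $G_\alpha$, so this gives $\gamma_{G_\alpha}(Mn)\ge 2^n$ for every $\alpha$ with the prescribed prefix; and the small values of $n$ are trivial, $G_\alpha$ being infinite.

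To produce those $2^n$ elements I would work inside $H_{k+1}\cong S_{J_{k+1}}\times S_{J_k}$, which is a subgroup of $G_\alpha$ by Proposition~\ref{pr:virtsimpalpha}. Fix an element $\delta\in G_\alpha$ whose action on a copy of $J_{k+1}$ is a nontrivial permutation of two consecutive vertices in the middle of the block $C_1$ and the identity on all other vertices of $J_{k+1}$; it is obtained from bounded-length commutators as in the proof of Proposition~\ref{pr:virtsimpalpha}, so $|\delta|$ is bounded by a constant depending only on $L,D$. For $1\le i\le n$ let $u_i$ be the word reading off the labels along the non-backtracking path from $C_1$ to $C_i$ inside $J_{k+1}$, so $u_1$ is empty and $u_{i+1}$ is $u_i$ followed by a block-traversal word of length $O(L)$. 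Conjugating $\delta$ by $u_i$ should move its support into a bounded neighbourhood of the $i$-th block, the neighbourhoods for $i=1,\dots,n$ being pairwise disjoint since each block has length $L$, larger than the support diameter of $\delta$. For $\epsilon\in\{0,1\}^n$ I would then take
\[
g_\epsilon=\delta^{\epsilon_1}\,u_2\,\delta^{\epsilon_2}\,(u_2^{-1}u_3)\,\delta^{\epsilon_3}\,(u_3^{-1}u_4)\cdots(u_{n-1}^{-1}u_n)\,\delta^{\epsilon_n}\,u_n^{-1}.
\]
Telescoping the conjugators, $g_\epsilon$ represents $\prod_{i:\epsilon_i=1}(u_i\delta u_i^{-1})$, a product of permutations of $J_{k+1}$ with pairwise disjoint nonempty supports, so distinct $\epsilon$ give distinct actions on $J_{k+1}$, hence $2^n$ distinct elements of $G_\alpha$; on the other hand, because $|u_i^{-1}u_{i+1}|=O(L)$ and $|u_n^{-1}|=O(nL)$, the displayed word has length $O(n(L+|\delta|))$, which is $\le Mn$ once $M$ is large enough.

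The crucial idea is this telescoping: the naive construction --- conjugating $\delta$ into the $i$-th block from scratch for each $i$ --- costs $O(iL)$ for the $i$-th block and gives only $|g_\epsilon|=O(n^2L)$, which is useless, whereas the telescoped word reuses the already-built traversal and spends only the $O(L)$ increments $u_i^{-1}u_{i+1}$; this is the same amortization that governs the generation-by-generation length estimates in~\cite[Sections~6--8]{nek:burnside} and in~\cite{grigorchuk:growth_en}. The point I expect to require the most care is the geometric claim used above --- that conjugation by the path word $u_i$ really moves the support of $\delta$ into a bounded neighbourhood of the $i$-th block, with the supports disjoint for $i=1,\dots,n$ --- since the $u_i$ are not literally shifts of the chain; making it precise amounts to a careful analysis of the action of the generators on $J_{k+1}$, and if need be one can first arrange $|I_k|$ to be large by replacing $k$ with a suitable larger value.
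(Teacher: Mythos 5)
The central gap is your element $\delta$. Your whole construction rests on the claim that there is a word $\delta$ in the generators, of length bounded by a constant depending only on $(\alpha_1,\dots,\alpha_{k-1})$ (i.e.\ on $L$ and $D$), whose action on a copy of $J_{k+1}$ is a nontrivial permutation supported only near the middle of the first block $C_1$ and trivial at every other vertex of $J_{k+1}$. You justify this by appealing to ``bounded-length commutators as in the proof of Proposition~\ref{pr:virtsimpalpha}'', but that proof gives no word-length control at all: the localized 3-cycles there arise as commutators $[x,h]$ or $[b,h]$ with $h\in H_k$, conjugated by arbitrary elements of $H_{J_k}$ and $H_{I_k}$, and both $h$ and the conjugators are only shown to exist, with lengths produced by an induction carrying no estimates. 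Moreover, the quantifier structure of Proposition~\ref{pr:exponentiallong} forces the bound on $|\delta|$ to be uniform in $N$, in the word $w$ (note that $J_{k+1}$ itself depends on $w$), and in the tail $\alpha_{k+1},\alpha_{k+2},\dots$, whereas a priori the word length of a given abstract permutation in $H_{k+1}\cong S_{J_{k+1}}\times S_{J_k}$ depends on all of these. There is also a concrete localization constraint hiding here: the action of a word of length $c$ at a vertex depends only on the $c$-ball around it, and the middles of $C_1$ and of $C_i$, $i\ge 2$, have isomorphic $c$-balls until $c$ is roughly $|I_k|/2$ (only then does the ball around the middle of $C_1$ see the double edge $e_1$ that distinguishes it from the single edges $t_{i-1},t_i$); so such a $\delta$ must be at least that long, and one must actually exhibit a word of about that length acting trivially at every other local type occurring in $J_{k+1}$. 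None of this is automatic; it is exactly the quantitative heart of the proposition, and your proof assumes it.

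The paper's own proof avoids localized elements entirely, and that is what makes it short. It chooses $w$ to contain every word $v\in\{x,y\}^*$ of length $\le N$ as a subword (your ``any $w$ with $m\ge N$'' would not suffice for that argument, though it is adequate for yours), takes $g$ to be the product of the labels along a simple path across $I_k$, and for each $v=t_1t_2\cdots t_n$ forms $g_v=t_ng\,t_{n-1}g\cdots t_1g$, of length exactly $n(|I_k|+1)$. These $2^n$ elements are distinguished not by disjoint supports but by the image of a single vertex: $g_v$ carries the left end of the segment $Z_v=I_k\{t_1\}I_k\{t_2\}\cdots I_k\{t_n\}$ (which occurs in the graphs precisely because of the choice of $w$) to its right end, while any $g_{v'}$ with $|v'|\le n$ and $v'\ne v$ cannot, since a geodesic between the two ends must cross the edges labeled $t_1,\dots,t_n$ in exactly those positions. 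This yields $\gamma_{G_\alpha}(Mn)\ge 2^n$ with $M=|I_k|+1$, using only explicit path words and no information about $H_\infty$. To rescue your telescoping argument you would need a separate lemma producing bounded-length words with the prescribed localized action, uniformly over $w$ and the tail of $\alpha$ --- a substantial piece of work that the paper's argument makes unnecessary.
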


\begin{proof}
Consider the segment $I_k$ defined by $(\alpha_1, \alpha_2, \ldots, \alpha_{k-1})$, and let $g$ be the product of the labels along a simple path from the left to the right endpoints of $I_k$, so that $g$ has length equal to the length of $I_k$ and moves it left end to the right end. 

Let $w\in\{x, y\}^*$ be a word containing all words $v\in\{x, y\}^*$ of length $\le N$ as subwords. For every word $v=t_1t_2\ldots t_n\in\{x, y\}^n$ consider the corresponding element $g_v=t_ng\cdot t_{n-1}g\cdots t_1g$ of $G_\alpha$ for $\alpha=(\alpha_1, \alpha_2, \ldots, \alpha_{k-1}, w, \alpha_{k+1}, \alpha_{k+2}, \ldots)$. 

Since $w$ contains $v=t_1t_2\ldots t_n\in\{x, y\}^n$, the segment $I_{k+1}$ will contain the sub-segment $Z_v=I_k\{t_1\}I_k\{t_2\}\ldots I_k\{t_n\}$. The element $g_v=t_ng\cdot t_{n-1}g\cdots t_1g$ will move the left end of $Z_v$ to its right end and the length of $g_v$ is equal to the length of $Z_v$. Any other element $g_{v'}$ for $v'\in\{x, y\}^*$ of length $\le n$ will move the left end of $Z_v$ to another vertex, since any geodesic path connecting the ends of $Z_v$ must have length $n$ and contain exactly the same edges $t_1, t_2, \ldots, t_n$ on the same places as in the geodesic path corresponding to $g_v$. It follows that all elements $g_v$ for $v\in\{x, y\}^n$, for $n\le N$, are pairwise different. There are $2^n$ of them, and their length is $n(|I_k|+1)$. It follows that $M=|I_k|+1$ and the chosen $w$ will satisfy the conditions of the proposition.
\end{proof}

\begin{theorem}
There exist uncountably many pairwise different growth types of simple finitely generated groups.
\end{theorem}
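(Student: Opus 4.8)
The plan is to use the three preceding propositions as a diagonalization/interpolation machine in the style of Grigorchuk's construction of uncountably many growth types. The key point is that Proposition~\ref{pr:virtsimpalpha} guarantees that for \emph{every} sequence $\alpha$ the group $G_\alpha$ is virtually simple (with simple derived subgroup $G_\alpha'$ of index $16$), so it suffices to produce uncountably many growth types among the $G_\alpha$, and then pass to the simple subgroups $G_\alpha'$, whose growth type agrees with that of $G_\alpha$ since $[G_\alpha:G_\alpha']=16$ is finite.

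First I would fix notation for two ``reference behaviors'': a sequence is \emph{subexponential-forcing} at a given tail position if it continues with $\sigma$'s forever (giving intermediate growth by the first statement of Proposition~\ref{pr:intermediatelong}), and \emph{exponential-forcing} if it continues with a sufficiently long word $w\in\{x,y\}^*$ (giving the lower bound $\gamma_{G_\alpha}(Mn)\ge 2^n$ over a long range by Proposition~\ref{pr:exponentiallong}). The construction is an inductive interleaving: given a countable parameter — say an element $s\in\{0,1\}^{\mathbb N}$, or equivalently an increasing function — I would build $\alpha=\alpha(s)$ in blocks. If $s_k=0$, insert a long run of $\sigma$'s whose length is chosen (via the second statement of Proposition~\ref{pr:intermediatelong}, applied to the finite prefix built so far and to an ever-growing radius $R_k$) so that the ball of radius $R_k$ in $G_\alpha$ agrees with that of the all-$\sigma$-tail group; this forces the growth function of $G_\alpha$ to be small — below $\exp(C_1 n/\exp(C_2\sqrt{\log n}))$ — on an interval $[1,R_k]$ that we push to infinity. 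If $s_k=1$, insert a word $w\in\{x,y\}^*$ supplied by Proposition~\ref{pr:exponentiallong} (applied to the current finite prefix, with $N=N_k\to\infty$), forcing $\gamma_{G_\alpha}(M_k n)\ge 2^n$ for $n\le N_k$, i.e. forcing exponential-order growth on an interval also pushed to infinity. By alternating and interleaving these two types of blocks according to $s$, the growth function of $G_{\alpha(s)}$ oscillates between ``essentially exponential'' and ``essentially the Grigorchuk-type bound'' along subsequences dictated by $s$.

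Second, I would extract a genuine invariant. The growth \emph{type} (the equivalence class under $f_1\preceq f_2 \iff f_1(n)\le f_2(Cn)$) is not changed by the choices of constants $C_1,C_2,M_k$, so from $\alpha(s)$ one reads off, roughly, the set of scales at which $\gamma$ is exponential versus subexponential; encoding this carefully gives a map from an uncountable set of parameters $s$ to growth types such that two parameters giving the same growth type must have ``the same large-scale oscillation pattern.'' The cleanest way to make this rigorous is the standard one: choose the parameter set to be, e.g., the set of sequences of block-lengths growing fast enough that distinct sequences are separated — concretely, for each real number in a Cantor-type set one builds $\alpha$ so that the ratio $\log\log\gamma_{G_\alpha}(n)/\log n$, or some similar coarse statistic, has $\limsup$ and pattern of excursions determined by that real. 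Since there are only countably many finitely generated groups up to isomorphism but we only need uncountably many growth \emph{types}, and since each growth type can be realized by at most countably many of our $G_\alpha$ is \emph{not} needed — what we need is that the assignment $s\mapsto(\text{growth type of }G_{\alpha(s)})$ has uncountable image, which follows once we show any fixed growth type is attained for only ``few'' $s$, or directly that prescribed distinct oscillation patterns are incompatible with $\preceq$-equivalence.

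The main obstacle is precisely this last bookkeeping: ensuring that the two forcing mechanisms are \emph{compatible in sequence} — that inserting an exponential-forcing block $w$ after a subexponential prefix does not retroactively destroy the low values of $\gamma$ already secured on $[1,R_k]$, and vice versa. This is handled by the locality of both propositions: Proposition~\ref{pr:intermediatelong} fixes the radius-$R$ ball once and for all by choosing $n$ large \emph{depending on the prefix and on $R$}, and nothing inserted afterward (it all happens at generation $\ge k+n$, hence at combinatorial scale $\gg R$) can change that ball; Proposition~\ref{pr:exponentiallong} likewise only asserts a lower bound up to range $N$ determined by the prefix, which is immune to later insertions. So the real work is to organize the induction so that the radii $R_k$ and ranges $N_k$ both tend to infinity while the intervals on which ``small'' and ``large'' growth are forced remain disjoint and cofinal, and then to verify that two sequences $s\ne s'$ with, say, the first disagreement at position $k$ and with block-lengths chosen along a lacunary sequence yield growth functions that are not $\preceq$-equivalent because one is exponentially large at a scale where the other is provably subexponential. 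Grigorchuk's argument in~\cite{grigorchuk:growth_en} is the template, and the only new ingredient is that here every group in the family is already virtually simple, so no further surgery is needed to land inside the class of simple groups.
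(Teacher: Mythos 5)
Your proposal follows essentially the same route as the paper: build $\alpha$ in blocks indexed by a binary sequence, using the second statement of Proposition~\ref{pr:intermediatelong} to freeze balls of radius $R_k$ (hence force the subexponential bound up to scale $R_k$) and Proposition~\ref{pr:exponentiallong} to force $\gamma(M_kn)\ge 2^n$ on long ranges, then pass to the index-$16$ simple subgroup from Proposition~\ref{pr:virtsimpalpha}, exactly as in the paper (which itself follows Grigorchuk's template). One point needs correction in how you separate growth types: your claim that two parameter sequences with a \emph{first} disagreement at position $k$ already yield non-equivalent growth functions is too strong, since the equivalence constant $C$ for a given pair is not known in advance and a discrepancy at a single scale can be absorbed by a large $C$. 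The paper handles this by fixing in advance a sequence of constants $C_n\to\infty$ and arranging at step $n$ the inequality $\gamma_0(C_nR_n)\le\gamma_1(R_n)$ between the $\sigma$-continued and $w$-continued prefixes; then only sequences differing in \emph{infinitely many} coordinates are shown to have inequivalent growth (choose a disagreement position $n$ with $C_n>C$), and the conclusion is that each growth-equivalence class meets the family in an at most countable set, which still gives continuum many types. Your alternative formulation ("any fixed growth type is attained for only few parameters") is the right one; your lacunary single-disagreement argument should be replaced by this countable-fiber argument, and also note in passing that your parenthetical remark that there are only countably many finitely generated groups up to isomorphism is false (though, as you say, not needed).
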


\begin{proof}
After we proved Propositions~\ref{pr:intermediatelong} and~\ref{pr:exponentiallong}, the proof of the theorem is similar to the proof of~\cite[Theorem~7.2]{grigorchuk:growth_en}.

Choose a sequence $C_1, C_2, \ldots$ of positive integers converging to infinity. 
We will define for every sequence $\rho=(r_1, r_2, \ldots)\in\{\zero, \one\}^\infty$ a sequence $\alpha_\rho$ such that there are uncountably many different growth types among the groups $G_{\alpha_\rho}$. The sequence $\alpha_\rho$ will be the limit of finite sequences $\alpha_{(r_1, r_2, \ldots, r_n)}$ so that $\alpha_{(r_1, r_2, \ldots, r_{n+1})}$ is a continuation of $\alpha_{(r_1, r_2, \ldots, r_n)}$. At the same time we will define a sequence $R_1, R_2, \ldots$ of positive integers.

Using Proposition~\ref{pr:intermediatelong} and Proposition~\ref{pr:exponentiallong}, we can find $R_1$,  $\alpha_{\zero}=(\sigma, \sigma, \ldots, \sigma)$, and $\alpha_{\one}=(w)$ for $w\in\{x, y\}^*$, such that if $\gamma_0(n)$ is the growth of $G_\alpha$ for any $\alpha$ beginning with $\alpha_\zero$, and $\gamma_1(n)$ is the growth of $G_\alpha$ for any $\alpha$ beginning with $\alpha_\one$, then we have
\[\gamma_0(C_1R_1)\le\gamma_1(R_1).\]

Suppose that we have defined $R_n$ and the sequences $\alpha_v$ for all sequences $v\in\{\zero, \one\}^n$ of length $n$. Then by Propositions~\ref{pr:intermediatelong} and~\ref{pr:exponentiallong}, there exists a number $k$,  a word $w\in\{x, y\}^*$, and a number $R_{n+1}>R_n$ such that if $\gamma_0$ is the growth function of any group $G_\alpha$, where $\alpha$ starts with $\alpha_v$ for $v\in\{\zero, \one\}^n$ followed by $\underbrace{\sigma, \sigma, \ldots, \sigma}_{\text{$k$ times}}$, and $\gamma_1$ is the growth function of any group $G_{\alpha'}$ for $\alpha'$ starting with $\alpha_u$ for $u\in\{\zero, \one\}^n$ followed by $w$, then we have
\[\gamma_0(C_{n+1}R_{n+1})\le\gamma_1(R_{n+1}).\]

This will give us an inductive definition of the groups $G_{\alpha_\rho}$ for $\rho\in\{\zero, \one\}^\infty$ and of the sequence $R_n$. Suppose now that $\rho_1, \rho_2\in\{\zero, \one\}^\infty$ are two sequences which differ in infinitely many coordinates. Suppose that the growth functions $\gamma_i$ of $G_{\alpha_{\rho_i}}$ are equivalent. Then there exists $C>1$ such that $\gamma_2(R)<\gamma_1(CR)$ and $\gamma_1(R)<\gamma_2(CR)$ for all $R\ge 1$. Since $\rho_1$ and $\rho_2$ are different in infinitely many coordinates, there exists $n$ such that $C_n>C$ and the $n$th coordinates of $\rho_1$ and $\rho_2$ are different. Then we will have either $\gamma_1(C_nR_n)\le\gamma_2(R_n)<\gamma_1(CR_n)$ or $\gamma_2(C_nR_n)\le\gamma_1(R_n)<\gamma_2(CR_n)$, which is a contradiction.

It follows that the sets of groups with equivalent growth functions in the set of groups $\{G_{\alpha_\rho}\;:\;\rho\in\{\zero, \one\}^\infty\}$ are at most countable, hence we have continuum pairwise different growth types in this set. Each group in this set is virtually simple by Proposition~\ref{pr:virtsimpalpha}. Since a finite index subgroup has the same growth type as the group, it follows that there is a continuum of different growth types of simple finitely generated groups.
\end{proof}

\end{document}